\newcommand{\Pic}{\operatorname{Pic}}
\newcommand{\rat}{{\operatorname{rat}}}
\newcommand{\Alb}{\operatorname{Alb}}
\newcommand{\End}{\operatorname{End}}
\newcommand{\Gammaf}{\Gamma_{\kern-3pt f}}
\def\alg{\text{alg}}
\def\tr{\text{tr}} 
\def\lra{\longrightarrow}
\def\map#1{{\buildrel #1 \over \lra}}
\newcommand{\C}{\mathbb{C}}
\renewcommand{\P}{\mathbb{P}}
\renewcommand{\L}{\mathbb{L}}
\newcommand{\Q}{\mathbb{Q}}
\newcommand{\Z}{\mathbb{Z}}
\newcommand{\sM}{\mathcal{M}}
\newcommand{\sO}{\mathcal{O}}
\newcommand{\Mrat}{\sM_\rat}
\numberwithin{equation}{section}
\theoremstyle{plain}
\newtheorem{thm}[equation]{Theorem}
\newtheorem{prop}[equation]{Proposition}
\newtheorem{lem}[equation]{Lemma}
\newtheorem{cor}[equation]{Corollary}
\theoremstyle{definition}
\newtheorem{defn}[equation]{Definition}
\newtheorem{ex}[equation]{Example}
\newtheorem{rk}[equation]{Remark}
\newtheorem{constr}[equation]{Construction}
\newtheorem{substuff}{Remark}[equation]
\newtheorem{subrem}[substuff]{Remark}
\begin{document}
 
\title[\ ] 
{Some surfaces of general  type for which Bloch's  conjecture holds}

\author{C. Pedrini and C. Weibel}
\date{\today}
\bigskip

\begin{abstract} We give many examples of surfaces of general type
with $p_g=0$ for which Bloch's conjecture holds, for all values of
$K_2\ne9$. Our surfaces are equipped with an involution.
\end{abstract}
\maketitle

Let $S$ be a smooth complex projective surface with $p_g(S)=0$.
Bloch's conjecture states that the Albanese map $A_0(S)_0\to\Alb(S)$
is an isomorphism, where $A_0(S)_0$ is the Chow group of 0-cycles 
of degree 0 on $S$.
It is known for all surfaces except those of general
type (see \cite{BKL}). For a surface $S$ of general type with 
$p_g(S)=0$ we also have $q(S)=0$, i.e., $\Alb(S)=0$ and the canonical divisor 
satisfies $1 \le K^2 \le 9$.

In the decades since this conjecture was formulated, surfaces of general
type have become somewhat better understood. Two key developments have been 
(i) the results of S.\,Kimura on finite dimensional motives in \cite{Ki} 
and (ii) the notion of the {\it transcendental motive $t_2(S)$} 
which was introduced in \cite{KMP}. 
This includes the theorem that if $S$ is a surface with $p_g(S)=q(S)=0$ 
then Bloch's conjecture holds for $S$ iff $t_2(S)=0$; 
see Lemma \ref{t2=0.BC}.

In this paper we give motivic proofs of Bloch's conjecture for 
several examples of surfaces of general type 
for each value of $K^2$ between 1 and 8. %with $1\le K^2\le8$. 
This includes some numerical Godeaux surfaces, classical Campedelli 
surfaces, Keum-Naie surfaces, Burniat surfaces and Inoue's surfaces. 
%with $K^2=7$. 
All these surfaces carry an involution. 
We can say nothing about the remaining case $K^2\!=\!9$,
because a surface of general type with $p_g\!=\!0$ and $K^2\!=\!9$ has no
involution (\cite[2.3]{DMP}).

Bloch's conjecture is satisfied by all surfaces whose 
minimal models arise as quotients $C_1\times C_2/G$ of the product of 
two curves of genera $\ge\!2$ by the action of a finite group $G$. 
A complete classification of these surfaces has been
given in \cite{BCGP} and \cite[0.1]{BCG}; the special case where
$G$ acts freely only occurs when $K^2_S=8$. We also show 
in Corollary \ref{K2=8rational} that Bloch's conjecture holds for surfaces 
with an involution $\sigma$ for which $K^2=8$ and $S/\sigma$ is rational.
The only known examples with $K^2=7$ are Inoue's surfaces, which satisfy
Bloch's conjecture, and a new family, recently constructed by Y.\,Chen 
in \cite{Ch}, who also shows that these surfaces satisfy Bloch's conjecture. 
Burniat surfaces with $K^2=6$ satisfy Bloch's conjecture by Theorem 
\ref{thm:Burniat}; this provides also examples with $K^2=3,4,5$. 
Propositions \ref{K2=6.1} and \ref{K2=6.2} give examples of surfaces 
with $K^2=6$ which are not Burniat, and which satisfy Bloch's conjecture.
Other examples with $K^2=3,4$ are given in Section 6. Some examples 
with $K^2=1,2$ are treated in Sections \ref{sec:K2=1} and \ref{sec:K2=2}.

\medskip
{\bf Notation.} 
We will work in the (covariant) category $\Mrat(k)$ of Chow motives with
coefficients in $\Q$, where the morphisms from $h(X)$ to $h(Y)$ are just 
the elements of $A^d(X\times Y)$, $d=\dim(Y)$. (The graph $\Gammaf$
of a morphism $f:X\to Y$ determines a map $f^t:h(X)\to h(Y)$.)
Here, and in the rest of this paper, $A^i(X \times Y)$ denotes the Chow
group $CH^i(X \times Y)\otimes \Q$ of codimension $i$ cycles modulo
rational equivalence, with $\Q$ coefficients.

\medskip
\section{Preliminaries}

Let $f: X\to Y$ be a finite morphism between smooth projective varieties
of dimension $d$. Then $\Gamma^t_f$ determines a map $h(Y)\to h(X)$
and the composition $\Gammaf\circ{}^t\Gammaf$ is $\deg(f)$ times the
identity of $h(Y)$.

\begin{lem}\label{Y=pX}
$f: X\to Y$ be a finite morphism between smooth projective varieties
of dimension $d$, and set $p=({}^t\Gammaf\circ\Gammaf)/\deg(f)$.
Then $p:h(X)\to h(X)$ is idempotent, and expresses $h(Y)$ as a direct
summand of $h(X)$ in $\Mrat(k)$.
\end{lem}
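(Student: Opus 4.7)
The plan is to verify the two assertions separately, relying on the identity $\Gammaf \circ {}^t\Gammaf = \deg(f)\cdot\mathrm{id}_{h(Y)}$ recorded just before the lemma. For idempotency, I would compute
$$
p \circ p = \frac{1}{\deg(f)^{2}}\,({}^t\Gammaf \circ \Gammaf) \circ ({}^t\Gammaf \circ \Gammaf)
= \frac{1}{\deg(f)^{2}}\,{}^t\Gammaf \circ (\Gammaf \circ {}^t\Gammaf) \circ \Gammaf,
$$
and then collapse the middle factor $\Gammaf \circ {}^t\Gammaf$ to $\deg(f)\cdot\mathrm{id}_{h(Y)}$. One power of $\deg(f)$ cancels and we are left with $p \circ p = ({}^t\Gammaf \circ \Gammaf)/\deg(f) = p$, as required.

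For the second claim, I would exhibit $h(Y)$ as a summand of $h(X)$ by displaying an explicit splitting. Set $i = {}^t\Gammaf \colon h(Y) \to h(X)$ and $r = \Gammaf/\deg(f) \colon h(X) \to h(Y)$. Then $r \circ i = \mathrm{id}_{h(Y)}$ is exactly the identity recalled above, while $i \circ r = p$ by the very definition of $p$. Since $\Mrat(k)$ is pseudo-abelian, this retraction realises $h(Y)$ as the image of $p$, giving an isomorphism $(h(X),p) \cong h(Y)$ and a decomposition $h(X) = h(Y) \oplus (h(X),\,\mathrm{id}_{h(X)}-p)$.

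There is no substantive obstacle here: the whole argument is a three-line manipulation of correspondences, once one has the key identity $\Gammaf \circ {}^t\Gammaf = \deg(f)\cdot\mathrm{id}_{h(Y)}$ stated in the preamble. The only point requiring care is keeping the direction convention straight, namely that in the covariant category $\Gammaf \in A^{\dim Y}(X\times Y)$ is a morphism $h(X)\to h(Y)$ while its transpose ${}^t\Gammaf \in A^{\dim X}(Y\times X)$ goes the other way, so that $p = {}^t\Gammaf \circ \Gammaf/\deg(f)$ indeed lands in $\End(h(X))$.
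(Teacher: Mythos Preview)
Your proof is correct and follows the same approach as the paper. The paper's own argument is the one-line idempotency computation you give, using the identity $\Gammaf\circ{}^t\Gammaf=\deg(f)\cdot\mathrm{id}_{h(Y)}$ stated just before the lemma; you simply spell out more explicitly the retraction $r\circ i=\mathrm{id}_{h(Y)}$, $i\circ r=p$ that realises $h(Y)$ as the image of $p$, which the paper leaves implicit.
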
 

\begin{proof}
We compute: $p\circ p=
{}^t\Gammaf^t\circ(\Gammaf\circ{}^t\Gammaf)\circ\Gammaf/\deg(f)=p$.
\end{proof}

\begin{ex}\label{ex:Y=Xsigma}
Suppose that $\sigma$ is an involution on $X$ and $Y=X/\sigma$.
Then $p=(1+\sigma)/2$ is represented by the correspondence
$(\Delta_X+\Gamma_\sigma)/2$, because 
${}^t\Gammaf\circ\Gammaf)=\Delta_X+\Gamma_\sigma$.
In particular, $h(Y)=h(X)^\sigma$.
\end{ex}

Let $\rho(S)$ denote the rank of $NS(S)$.
The Riemann-Roch Theorem gives a well known formula for $\rho$:

\begin{lem}\label{rho+K2}
If $p_g(S)=q(S)=0$, then $\rho(S)=10-K^2_S$.
\end{lem}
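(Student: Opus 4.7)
The plan is to combine Noether's formula with the Lefschetz (1,1)-theorem. First I would compute $K^2_S + c_2(S)$ from Noether's formula
\[
\chi(\sO_S) = \frac{K^2_S + c_2(S)}{12}.
\]
Since $p_g(S)=q(S)=0$, we have $\chi(\sO_S) = 1 - q(S) + p_g(S) = 1$, so $K^2_S + c_2(S) = 12$.

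Next I would rewrite $c_2(S)$ as the topological Euler characteristic $e(S) = 2 - 2b_1(S) + b_2(S)$. The hypothesis $q(S)=0$ forces $b_1(S) = 2q(S) = 0$, so $c_2(S) = 2 + b_2(S)$. Combining with the previous step yields $b_2(S) = 10 - K^2_S$.

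Finally, I would identify $\rho(S)$ with $b_2(S)$. The Hodge decomposition gives $b_2(S) = 2p_g(S) + h^{1,1}(S) = h^{1,1}(S)$ since $p_g(S)=0$; in particular $H^2(S,\C) = H^{1,1}(S)$. Then the exponential sequence, together with $H^2(S,\sO_S)=0$, shows that $\Pic(S) \to H^2(S,\Z)$ is surjective modulo torsion, so $NS(S)$ has rank equal to $b_2(S)$. Hence $\rho(S) = 10 - K^2_S$.

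No step here is really the main obstacle; the only subtle point is the last one, namely that the absence of transcendental classes in $H^2$ (which is what gives $\rho = b_2$) follows from $p_g = 0$ via the Lefschetz (1,1)-theorem rather than being automatic. Everything else is a direct substitution into Noether's formula.
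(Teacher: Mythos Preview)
Your argument is correct and follows the same approach as the paper: Noether's formula gives $K_S^2 + c_2 = 12$, and $c_2 = 2 + b_2$ since $q=0$, whence $b_2 = 10 - K_S^2$. The paper simply asserts $c_2 = 2 + \rho(S)$ in one line, whereas you spell out the identification $\rho(S) = b_2(S)$ via the Lefschetz $(1,1)$-theorem; this extra care is appropriate, since that step does use $p_g=0$.
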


\begin{proof}
The topological Euler characteristic of $S$ is $\deg(c_2)=2 +\rho(S)$.
% see \cite[3.2.13]{Fu} 
Since the Euler characteristic of $\sO_S$ is 1, the Riemann-Roch Theorem
(\cite[15.2.2]{Fu}) yields $12=K^2_S +\deg(c_2)$.
Thus $\rho(S)= 10-K^2_S$.
\end{proof}

\begin{subrem} 
If $S$ is a minimal surface of general type, then $K^2_S >0$
\cite[VII.2.2]{BHPV}. Since $\rho(S) \ge1$, we derive the inequality
$1\le K^2_S\le9$. We also have $\deg c_2 >0$, see
\cite[VII.1.1]{BHPV}. By Noether's formula $1-q(S)+p_g(S)$ is
positive, hence if $p_g(S)=0$ then also $q(S)=0.$
\end{subrem} 

\medskip
\paragraph{\bf The algebraic motive $h^\alg_2(S)$}
If $S$ is a surface, the Neron-Severi group $NS(S)$ determines a summand
of the motive of $S$ (with coefficients $\Q$). To construct it,
choose an orthogonal basis $\{E_1,\cdots,E_{\rho}\}$ for the 
$\Q$-vector space $NS(S)_{\Q}$, where the self-intersections $E_i^2$
are nonzero. ($\rho$ is the rank of $NS(S)$.) Then
the correspondences $\epsilon_i=\frac{[E_i\times E_i]}{(E_i)^2}$
are orthogonal and idempotent, so
\[
\pi^\alg_2(S)=\sum_{1\le i\le\rho} \frac{[E_i\times E_i]}{(E_i)^2}
\]
is also an idempotent correspondence. 
Since $\{ E_i/(E_i)^2\}$ is a dual basis to the $\{ E_i\}$, it follows from
\cite[7.2.2]{KMP} that $\pi^\alg_2(S)$ is independent of the choice of basis.
We set $M_i=(S,\epsilon_i, 0)$
and $h^\alg_2(S)=(S,\pi^\alg_2(S))=\oplus M_i$.
In fact, $M_i\simeq\L$ for all $i$ by \cite[7.2.3]{KMP}, 
so we have isomorphisms
$h^\alg_2(S) \cong \L^{\oplus\rho}$ and $H^2(h^\alg_2(S))\cong NS(S)_\Q$.

\smallskip
\paragraph{\bf The transcendental motive $t_2$}
We also need a description of the transcendental motive $t_2$ of a 
surface $S$. It is well known that the motive $h(S)$ has a 
Chow-K\"unneth decomposition as $\sum_0^4 h_i(S)$,
where $H^i(S)=H^i(h_i(S))$. The middle factor $h_2(S)$ further
decomposes as $h_2(S)=h^\alg_2(S)\oplus t_2(S)$; see \cite{KMP}. 

The factor $t_2(S)=(S,\pi^\tr_2,0)$ is called the {\it transcendental part} 
of the motive (cf.\,\cite[7.2.3]{KMP}).  This terminology is justified 
by the following result, which identifies $H^2(t_2(S))=\pi^{tr}_2H^2(S)$ 
with the Hodge-theoretic group $H^2_\tr(S)$. 

\begin{lem}\label {int-pairing} 
Under the intersection pairing on $H^2(S,\C)$ the orthogonal complement 
$H^2_{tr}(S)$ of $NS(S)\otimes\C$ is $\pi^{tr}_2 H^2(S)$.
\end{lem}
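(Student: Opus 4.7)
The plan is to compute explicitly how the algebraic projector $\pi^\alg_2$ acts on $H^2(S,\C)$, and then extract the behavior of $\pi^\tr_2$ from the relation $\pi_2 = \pi^\alg_2 + \pi^\tr_2$ coming from the Chow--K\"unneth decomposition $h_2(S) = h^\alg_2(S)\oplus t_2(S)$ noted above.

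First I would unpack the action of each summand $\epsilon_i=[E_i\times E_i]/(E_i)^2$ of $\pi^\alg_2$ on cohomology. Under the K\"unneth isomorphism, $[E_i\times E_i]$ corresponds to $[E_i]\otimes[E_i]\in H^2(S)\otimes H^2(S)$, and so acts on $H^2(S,\C)$ (after Poincar\'e duality) by the formula
\[
\epsilon_i(\alpha)\;=\;\frac{(\alpha\cdot E_i)}{(E_i)^2}\,[E_i],
\]
where $\cdot$ denotes the intersection pairing. Summing over $i$, one checks: if $\alpha=\sum_j c_j[E_j]\in NS(S)\otimes\C$, then by orthogonality $\alpha\cdot E_i=c_i(E_i)^2$, so $\pi^\alg_2(\alpha)=\alpha$; whereas if $\alpha$ is intersection-orthogonal to every $[E_i]$, then $\pi^\alg_2(\alpha)=0$. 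Thus $\pi^\alg_2$ acts on $H^2(S,\C)$ as the orthogonal projector onto $NS(S)\otimes\C$ with kernel equal to its orthogonal complement $H^2_{tr}(S)$.

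Now $\pi_2$ is the K\"unneth projector onto $H^2$, so it acts as the identity on $H^2(S,\C)$. Hence $\pi^\tr_2 = \pi_2 - \pi^\alg_2$ acts on $H^2(S,\C)$ as the identity minus the orthogonal projection onto $NS(S)\otimes\C$, i.e.\ as the orthogonal projection onto $H^2_{tr}(S)$. Being idempotent, its image coincides with its set of fixed vectors, which is exactly $H^2_{tr}(S)$. This gives $\pi^\tr_2H^2(S)=H^2_{tr}(S)$.

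The only point requiring care is the translation of the Chow correspondence $[E_i\times E_i]$ into its induced endomorphism of $H^2(S,\C)$, which uses the K\"unneth formula together with Poincar\'e duality; the remainder is a transparent consequence of the orthogonality of the chosen basis and the self-duality (up to the scalars $(E_i)^2$) that it exhibits. I do not anticipate a real obstacle once these identifications are in place.
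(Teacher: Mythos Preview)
Your argument is correct. You compute the cohomological action of each $\epsilon_i$ explicitly via K\"unneth and Poincar\'e duality, identify $\pi^\alg_2$ on $H^2(S,\C)$ with the orthogonal projection onto $NS(S)\otimes\C$, and then read off $\pi^\tr_2=\pi_2-\pi^\alg_2$ as the complementary orthogonal projection. This is sound and self-contained.

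The paper takes a slightly different, more abstract route. Rather than computing the action of $\pi^\alg_2$ on $H^2$ directly, it invokes the relation $\pi^\tr_2\cdot\pi^\alg_2=0$ at the level of correspondences (citing \cite{CG}) and uses the fact that this relation passes to cohomology: for $a$ in the image of $\pi^\tr_2$ and $b\in NS(S)$ one has $a\cdot b=(a\cdot\pi^\tr_2)\cdot(\pi^\alg_2\cdot b)=a\cdot(\pi^\tr_2\cdot\pi^\alg_2)\cdot b=0$. This gives the inclusion $\pi^\tr_2H^2(S)\subseteq H^2_\tr(S)$; the reverse inclusion is left implicit (it follows from $H^2=\pi^\alg_2H^2\oplus\pi^\tr_2H^2$ together with $\pi^\alg_2H^2=NS\otimes\C$ and a dimension count using nondegeneracy of the pairing). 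Your approach has the advantage of being explicit and not relying on an outside reference, and it makes both inclusions transparent at once; the paper's argument is shorter but leans on the cited orthogonality of projectors and leaves the equality of dimensions to the reader.
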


\begin{proof} 
Given $a \in H^2_{tr}(S)$ and $b \in NS(S)$ we have
$$
a\cdot b=(a\cdot\pi^{tr}_2)\cdot (\pi^{alg}_2\cdot b)= 
a\cdot (\pi^{tr}_2\cdot \pi^{alg}_2)\cdot b=0,
$$
since $\pi^{tr}_2 \cdot \pi^{alg}_2=0$ by \cite[p.\,289]{CG}.
\end{proof}

The following result was established in \cite[7.4.9 \& 7.6.11]{KMP}.

\begin{lem}\label{t2=0.BC}
If $S$ is a smooth projective complex surface, then
the following are equivalent:\\
(a) $t_2(S)=0$ holds; \\
(b) $p_g(S)=0$ and the motive $h(S)$ is finite dimensional; \\
(c) $p_g(S)=0$ and  $S$ satisfies Bloch's conjecture.
\end{lem}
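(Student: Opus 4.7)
My plan is to cycle through the three conditions, relying on two calculations of invariants of $t_2(S)$ together with Kimura's finite-dimensionality criterion. First I identify the relevant invariants: as the complement of $h^\alg_2(S)$ in $h_2(S)$, the motive $t_2(S)$ has cohomology concentrated in degree $2$, and by Lemma \ref{int-pairing} one has $H^2(t_2(S))=H^2_\tr(S)$, whose Hodge component $H^{2,0}$ has dimension $p_g(S)$. A parallel Chow-theoretic calculation gives $A^i(t_2(S))=0$ for $i\neq 2$ and identifies $A^2(t_2(S))$ with the Albanese kernel $T(S)=\ker(A_0(S)_0\to\Alb(S))$; when $q(S)=0$ this reduces to $A_0(S)_0$.

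Granted these identifications, (a) $\Rightarrow$ (b) and (a) $\Rightarrow$ (c) are immediate. If $t_2(S)=0$, then $H^{2,0}=0$ forces $p_g(S)=0$; Bloch's conjecture then follows from $A_0(S)_0=A^2(t_2(S))=0$; and the Chow--K\"unneth decomposition $h(S)=h_0\oplus h_1\oplus h^\alg_2\oplus h_3\oplus h_4$ writes $h(S)$ as a direct sum of finite-dimensional motives (Lefschetz motives in even degrees, motives of abelian varieties in odd degrees). For (b) $\Rightarrow$ (a) I would invoke the Lefschetz $(1,1)$-theorem to see that $p_g=0$ forces $H^2_\tr(S)=0$, so $H^*(t_2(S))=0$; since $t_2(S)$ is a summand of the finite-dimensional motive $h(S)$, Kimura's theorem (a finite-dimensional Chow motive with vanishing cohomological realisation must vanish) then yields $t_2(S)=0$.

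The main obstacle is the remaining implication (c) $\Rightarrow$ (a), because Bloch's conjecture together with $p_g=0$ only delivers the numerical vanishings $A^*(t_2(S))=0$ and $H^*(t_2(S))=0$, and in general such vanishings do not force a Chow motive to be zero. For this step I would apply a Bloch--Srinivas type argument: the hypothesis $A_0(S)_0=0$ produces, after passing to the generic point of the first factor, a decomposition of the diagonal $\Delta_S\in A^2(S\times S)$ as a sum of cycles supported on $D\times S$ and $S\times\{\mathrm{pt}\}$ for some divisor $D\subset S$; manipulating this decomposition exhibits $t_2(S)$ as a summand of the motive of a curve, hence as finite-dimensional, at which point Kimura's theorem closes the argument as in the previous paragraph. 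This last step is the technical heart of the equivalence, worked out in \cite[7.4.9, 7.6.11]{KMP}.
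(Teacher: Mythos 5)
Your proposal is essentially correct, but it does something the paper does not: the paper gives no argument at all for this lemma, simply citing \cite[7.4.9 \& 7.6.11]{KMP}, whereas you reconstruct the content of those references. Your reconstruction matches the standard proof: the identifications $H^2(t_2(S))=H^2_\tr(S)$ and $A^2(t_2(S))=T(S)$, the Chow--K\"unneth pieces $h_0,h_1,h_2^{\alg},h_3,h_4$ being finite dimensional (Lefschetz motives and abelian-variety type motives), Lefschetz $(1,1)$ giving $H^2_\tr(S)=0$ when $p_g=0$, Kimura's nilpotence theorem to kill a finite-dimensional motive with trivial realisation, and a Bloch--Srinivas decomposition of the diagonal for the hard implication (c)$\Rightarrow$(a). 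Two small precision points, since the lemma is stated for arbitrary smooth projective complex surfaces and not only those with $q(S)=0$: in (a)$\Rightarrow$(c) the correct conclusion from $A^2(t_2(S))=0$ is that the Albanese kernel $T(S)$ vanishes (which is exactly Bloch's conjecture), not that $A_0(S)_0=0$; and in (c)$\Rightarrow$(a) the hypothesis only gives $T(S)=0$, so the Bloch--Srinivas decomposition writes a multiple of $\Delta_S$ as a cycle supported on $D\times S$ plus one supported on $S\times C$ for a curve $C$ (a point only when $q(S)=0$). This changes nothing in your argument, since both pieces still factor the identity of $t_2(S)$ through (twists of) curve motives, which are finite dimensional, and Kimura then gives $t_2(S)=0$; alternatively one can avoid Kimura here by using the vanishing of Hom groups between $t_2(S)$ and curve motives proved in \cite{KMP}. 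So your route is a legitimate, self-contained substitute for the citation, at the cost of invoking the Bloch--Srinivas machinery explicitly.
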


We will often use without comment the fact that $t_2$ is a birationally
invariant functor on the category of smooth 2-dimensional varieties;
this is proven in \cite[7.8.11]{KMP}.

\begin{lem}\label{lem:t2^G}
If $\sigma$ is an involution of a surface $S$, and $Y$ is a 
desingularization of $S/\sigma$, then $t_2(Y)=t_2(S)^{\sigma}$.
\end{lem}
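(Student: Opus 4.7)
The plan is to reduce to the case where $Y = S/\sigma$ is itself smooth, invoke Example \ref{ex:Y=Xsigma} for the identification $h(Y) = h(S)^\sigma$, and then transfer this to the transcendental summand by verifying that $\pi^{tr}_2(S)$ commutes with $\sigma$.

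First, let $\tilde{S}\to S$ denote the $\sigma$-equivariant blow-up of $S$ at the isolated fixed points of $\sigma$. At each such point $p$, the differential $d\sigma_p$ admits no $+1$-eigenvector (else the fixed locus at $p$ would have positive dimension), so $\sigma$ acts as $-\mathrm{id}$ on $T_pS$ and trivially on the exceptional $\P^1$ over $p$. Hence $\tilde{S}/\sigma$ is smooth, and by birational invariance of $t_2$ on both $S$ and $Y$ we may replace $S$ by $\tilde{S}$ and $Y$ by $\tilde{S}/\sigma$. We may therefore assume $Y = S/\sigma$ is smooth and $f : S \to Y$ is finite of degree $2$.

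Example \ref{ex:Y=Xsigma} then identifies $h(Y)$ with the summand $(S,(1+\sigma)/2,0)$ of $h(S)$. The crucial point is that $\pi^{tr}_2(S)$ commutes with $\sigma$: the projector $\pi^\alg_2(S)$ is independent of the chosen orthogonal basis of $NS(S)_\Q$, and since $\sigma$ carries any such basis to another, $\pi^\alg_2(S)$ is $\sigma$-invariant; the remaining Chow-K\"unneth projectors $\pi_i(S)$ can be chosen $\sigma$-equivariantly (possible with $\Q$-coefficients), and hence $\pi^{tr}_2(S) = \pi_2(S) - \pi^\alg_2(S)$ commutes with $\sigma$.

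The commuting idempotents $\pi^{tr}_2(S)$ and $(1+\sigma)/2$ then cut out a common summand of $h(S)$, namely $t_2(S)^\sigma$, which is contained in $h(Y)$. The remaining task is to identify this summand with $t_2(Y)$, i.e., to show that the Chow-K\"unneth decomposition of $h(Y)$ coincides with the one induced by intersecting with $h(S)^\sigma$. This reduces to the identification $NS(Y)_\Q \cong NS(S)^\sigma_\Q$ (a consequence of $f$ being a finite quotient, with $\Q$-coefficients), which identifies $\pi^\alg_2(Y)$ with the restriction of $\pi^\alg_2(S)$ to $h(Y)$, and hence $\pi^{tr}_2(Y)$ with the restriction of $\pi^{tr}_2(S)$. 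The main obstacle is this last compatibility of the Chow-K\"unneth decompositions under the quotient map; once it is verified, the conclusion $t_2(Y) = t_2(S)^\sigma$ follows.
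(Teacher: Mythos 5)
Your reduction to the case $Y=S/\sigma$ smooth (blow up the isolated fixed points, where $d\sigma_p=-\mathrm{id}$, and use birational invariance of $t_2$) and the identification $h(Y)=h(S)^\sigma$ via Example \ref{ex:Y=Xsigma} coincide with the paper's first two steps. The difficulty is the last step, which you yourself flag: identifying the summand $t_2(S)^\sigma$ of $h(S)^\sigma=h(Y)$ with $t_2(Y)$ is precisely the content of the lemma, and your proposal stops at ``once it is verified, the conclusion follows.'' Moreover, the proposed reduction to $NS(Y)_\Q\cong NS(S)_\Q^\sigma$ does not by itself close the gap: even after matching $\pi^\alg_2$, you still need to know that the summand of $h(Y)$ cut out by restricting $\pi^{tr}_2(S)$ agrees, up to canonical isomorphism, with $t_2(Y)$ computed from any (other) Chow--K\"unneth decomposition of $h(Y)$; this uniqueness is not automatic, and your equivariant choice of the remaining projectors $\pi_0,\pi_1,\pi_3,\pi_4$ is likewise only asserted, not constructed.

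The missing ingredient is the functoriality of the transcendental motive established in \cite{KMP} (\S7.4): a correspondence between surfaces induces a well-defined morphism of the $t_2$'s, compatibly with composition, and $t_2$ is independent of the choices made. This is what the paper's phrase ``$h(S)\to h(Y)$ sends $t_2(S)$ to $t_2(Y)$'' encodes, and it makes the hands-on projector bookkeeping unnecessary: $\Gammaf$ and ${}^t\Gammaf$ induce maps $t_2(S)\to t_2(Y)$ and $t_2(Y)\to t_2(S)$ whose composites are $1+\bar\sigma$ on $t_2(S)$ and $\deg(f)=2$ on $t_2(Y)$ (Lemma \ref{Y=pX} and Example \ref{ex:Y=Xsigma}); hence $t_2(Y)$ is the image of the idempotent $(1+\bar\sigma)/2$, i.e., $t_2(S)^\sigma$. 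If you add this citation (or prove that functoriality), your argument closes; as written, the crucial compatibility remains unproved.
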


\begin{proof} 
Because $t_2$ is a birational invariant, we may blow up $S$ to assume
that $Y=S/\sigma$. (See \eqref{square:S-sigma} below). Since 
$h(S)\to h(Y)$ sends $t_2(S)$ to $t_2(Y)$, and $h(Y)=h(S)^{\sigma}$ by 
Example \ref{ex:Y=Xsigma}, the result follows. 
\end{proof}

\goodbreak
\section{Involutions on surfaces}\label{sec:involutions}

Let $S$ be a smooth projective surface with an involution $\sigma$.
The fixed locus consists of a 1-dimensional part $D$ (a union of
smooth curves, possibly empty) and 
$k\ge0$ isolated fixed points $\{P_1,\cdots, P_k \}$.
The images $Q_i$ of the $P_i$ are nodes on the quotient surface $S/\sigma$,
and $S/\sigma$ is smooth elsewhere. To resolve these singularities,
let $X$ denote the blow-up of $S$ at the set of isolated fixed points;
$\sigma$ lifts to an involution on $X$ (which we will still call $\sigma$),
and the quotient $Y=X/\sigma$ is a desingularization of $S/\sigma$. 
The images $C_1,\dots,C_k$ in $Y$ of the exceptional divisors of $X$
are disjoint nodal curves, 
i.e., smooth rational curves with self-intersection $-2$. 
In summary, we have a commutative diagram

\begin{equation}\label{square:S-sigma}
\CD X @>{h}>>S \\
@VV{\pi}V  @VV{f}V \\
Y@>{g}>> S/\sigma. \endCD
\end{equation}

\smallskip
The image $f_*D$ is a smooth curve on $S/\sigma$, disjoint from
the singular points $Q_i$, and its proper transform $B'=g^*(f_*D)$
is smooth and disjoint from the curves $C_i$. It follows that 
$\pi:X\to Y$ is a double cover with smooth branch locus $B=B'+\sum C_i$.
As such, $\pi$ is determined by a line bundle $L$ on $Y$ such that
$2L \equiv B.$

\begin{lem}\label{2L2=D2-k}
$2 L^2= D^2-k$.
\end{lem}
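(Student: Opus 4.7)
The plan is to express both sides in terms of intersection numbers on $X$ and $Y$ using the double cover structure $\pi:X\to Y$, and then exploit the fact that the $C_i$'s are $(-2)$-curves disjoint from $B'$.

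First, since $2L\equiv B$ on $Y$, I would compute $4L^2=B^2=(B'+\sum C_i)^2$. Because $B'$ is disjoint from each $C_i$, and the $C_i$'s are pairwise disjoint (they are images of distinct exceptional divisors of $h$), this reduces to
\[
4L^2 = (B')^2 + \sum_{i=1}^{k} C_i^2 = (B')^2 - 2k,
\]
using that each $C_i$ is a smooth rational curve with self-intersection $-2$.

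Next I would relate $(B')^2$ to $D^2$ via the double cover $\pi$. Since $\pi$ is a double cover with branch locus $B=B'+\sum C_i$, the pullback of the branch locus is twice the ramification divisor; in particular, $\pi^*B'=2\widetilde D$, where $\widetilde D$ is the 1-dimensional fixed locus of $\sigma$ on $X$ lying over $B'$. Because the curve $D\subset S$ is disjoint from the isolated fixed points $P_i$, its strict transform under $h:X\to S$ equals its total transform, so $\widetilde D=h^*D$ and $\widetilde D^2=D^2$. Then by the projection formula
\[
4\,D^2 = (2\widetilde D)^2 = (\pi^*B')^2 = \deg(\pi)\cdot (B')^2 = 2(B')^2,
\]
so $(B')^2 = 2D^2$. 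Substituting into the previous display gives $4L^2=2D^2-2k$, i.e., $2L^2=D^2-k$.

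The only genuinely delicate point is the identification $\pi^*B'=2\widetilde D$: one must check that the lifted involution on $X$ has 1-dimensional fixed locus exactly $\widetilde D+\sum E_i$ (where the $E_i$ are the exceptional divisors of $h$), because at an isolated fixed point the differential $d\sigma$ has eigenvalues $-1,-1$, so $\sigma$ acts trivially on the exceptional $\mathbb{P}^1$. This is exactly the setup described before the lemma, and it guarantees that $\pi$ is ramified along $\widetilde D$ over $B'$ and along $E_i$ over $C_i$, which is what makes the pullback formula hold. Everything else is a direct intersection-theoretic calculation.
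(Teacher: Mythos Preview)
Your proof is correct and follows essentially the same route as the paper's: compute $4L^2=(B')^2+\sum C_i^2=(B')^2-2k$ from $2L\equiv B'+\sum C_i$ and the disjointness, then establish $(B')^2=2D^2$. The paper states the latter with almost no justification (writing ``$B'=g^*(D)$'' and asserting $(B')^2=2D^2$), whereas you supply the clean argument via the double cover $\pi:X\to Y$ and the projection formula; this is the same identity, just made explicit.
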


\begin{proof}
Because the curves $C_i$ on $X$ have $C_i^2=-2$ and are disjoint
from $B'=g^*(D)$ we have $(B')^2=2D^2$. Since $2L \equiv B'+\sum C_i$
we have $4 L^2= (B')^2+\sum C_i^2= 2D^2 -2k$.
\end{proof}

\begin{subrem}\label{l(2K+L)}
If $S$ is a minimal surface of general type and $p_g(S)=0$, 
it is proven in \cite[3.3]{CCM} that $k\ge4$ and the linear system
$2K_Y+L$ has dimension $l(2K_Y+L)=(K_S^2+4-k)/2$.
\end{subrem}

Since $t_2(-)$ is a birational invariant for smooth projective surfaces,
the maps $h: X \to S$ and $\pi: X \to Y$ induce a morphism
\[
\theta: t_2(S) \cong t_2(X) \to t_2(Y).
\]
By Lemma \ref{Y=pX} and Example \ref{ex:Y=Xsigma}, 
$\theta$ is the projection onto the direct summand
$t_2(S)^\sigma$ of $t_2(S)$, and $A_0(Y)$ is the direct summand
of $A_0(X)$ fixed by $\sigma$.

\goodbreak
\begin{prop}\label{old nice thm}
Let $S$ be a smooth surface $q(S)\!=\!0$.
If $\sigma$ is an involution on $S$ then:
\begin{enumerate}
\item[(i)] $t_2(S)\cong t_2(Y) \Longleftrightarrow \bar \sigma=+1 
   \text{ in } \End_{\Mrat}(t_2(S))$
 \item[(ii)] $t_2(Y)=0 \Longleftrightarrow  \bar\sigma= -1 
   \text{ in } \End_{\Mrat}(t_2(S)).$
\end{enumerate}
Here $\bar\sigma$ is the endomorphism of $t_2(S)$ induced by $\sigma$
\end{prop}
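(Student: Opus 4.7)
The plan is to realize $t_2(Y)$ as the image of an explicit idempotent on $t_2(S)$ and then reduce both biconditionals to the elementary fact that, in a pseudo-abelian category, an idempotent whose image is zero must itself be zero.

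First, I would exploit Lemma \ref{lem:t2^G}, which gives $t_2(Y) = t_2(S)^{\sigma}$. Because $\sigma$ is an involution, $\bar\sigma^{2} = 1$ in $\End_{\Mrat}(t_2(S))$, and so the elements $e_{\pm} = (1\pm\bar\sigma)/2$ are complementary orthogonal idempotents summing to the identity. By definition the fixed summand $t_2(S)^{\sigma}$ is $(t_2(S),e_{+})$, producing a direct-sum decomposition
\[
t_2(S) = t_2(Y) \oplus N, \qquad N := (t_2(S), e_{-}),
\]
with $\theta\colon t_2(S)\to t_2(Y)$ (introduced just before the statement) being the projection onto the first factor.

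For (ii), I would use that for an idempotent $p\in\End(M)$ in a pseudo-abelian category, the object $(M,p)$ is zero iff $p=0$, since the identity morphism on $(M,p)$ is represented by $p$ itself. Applying this to $t_2(Y) = (t_2(S),e_{+})$ gives $t_2(Y)=0 \Longleftrightarrow e_{+}=0 \Longleftrightarrow \bar\sigma = -1$.

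For (i), the natural map $\theta$ is an isomorphism iff the complementary summand $N$ vanishes, iff $e_{-}=0$, iff $\bar\sigma = +1$. I would interpret the symbol $\cong$ in the statement through this tautological map $\theta$, since otherwise one would need a cancellation property for Chow motives that is not generally available.

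The only delicate point — and essentially the only possible obstacle — is precisely the reading of $\cong$ in (i); once it is taken to mean that $\theta$ itself is an isomorphism, and once $t_2(Y)$ has been identified with the image of the explicit idempotent $e_{+}$, the argument is entirely formal and reduces to manipulating the identity $(1\pm\bar\sigma)/2$ inside the endomorphism ring.
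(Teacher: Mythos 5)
Your argument is essentially the paper's own proof: both rest on Lemma \ref{lem:t2^G} to identify $t_2(Y)$ with $t_2(S)^{\sigma}$, cut out by the idempotent $(1+\bar\sigma)/2$, and then read off (i) and (ii) as the statements $e=1$ and $e=0$ in $\End_{\Mrat}(t_2(S))$. Your explicit remark that the isomorphism in (i) must be interpreted via the canonical projection $\theta$ (to avoid invoking cancellation for Chow motives) is a point the paper leaves implicit, but it does not change the route.
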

\goodbreak

\begin{proof}
Since we have $t_2(Y)=t_2(S)^\sigma$ by Lemma \ref{lem:t2^G}, 
the projection onto $t_2(Y)$ is given by the idempotent endomorphism
$e=(\bar\sigma+1)/2$ of $t_2(S)$.
Since $t_2(S)\cong t_2(Y)$ is equivalent to $e=1$, and $t_2(Y)=0$ 
is equivalent to $e=0$, the result follows.
\end{proof}

\begin{subrem}
If $S$ is a smooth minimal surface of general type with $p_g(S)=0$
and an involution $\sigma$, such that the minimal model $W$ of $Y$ is 
either an Enriques surface, a rational surface or a surface of 
Kodaira dimension equal to 1 then by \cite{GP} we have $t_2(Y)=0$.
\end{subrem}

\begin{defn}\label{def:bidouble}
A {\it bidouble cover} $f: V \to X$ between smooth projective surfaces 
is a finite flat Galois morphism with Galois group $\Z/2\times\Z/2$. 
By \cite{Pa91}, in order to define $f$ it is enough to give 
smooth divisors $D_1, D_2,D_3 $ in $X$ 
with pairwise transverse intersections and no common intersections, 
and line bundles $L_1, L_2, L_3$ such that $2L_i \equiv D_j+D_k$ 
for each permutation $(i,j,k)$ of $(1,2,3)$.
We will frequently use the fact that every nontrivial element
$\sigma$ of $G$ is an involution on $S$.
\end{defn}  

\begin{thm}\label{3-involutions}
Let $S$ be surface of general type with $p_g(S)=0$ 
which is the smooth minimal model of a bidouble cover of a surface. 
Let $Y_i$ denote the desingularization of $S/\sigma_i$, where
$\sigma_1,\sigma_2,\sigma_3$ are the non-trivial involutions
of $S$ associated to the bidouble cover.
If $t_2(Y_i)=0$ for $i=1,2,3$ then $t_2(S)=0$.
\end{thm}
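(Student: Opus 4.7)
The plan is to apply Proposition~\ref{old nice thm}(ii) to each of the three involutions $\sigma_i$. Since $S$ is of general type with $p_g(S)=0$, the remark following Lemma~\ref{rho+K2} gives $q(S)=0$, so the hypothesis of Proposition~\ref{old nice thm} is met. The assumption $t_2(Y_i)=0$ for $i=1,2,3$ therefore translates into the three identities
\[
\bar\sigma_i = -\mathrm{id}_{t_2(S)} \quad\text{in } \End_{\Mrat}(t_2(S)), \qquad i=1,2,3.
\]

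Next I would exploit the group law of the Galois group $G=\Z/2\times\Z/2$: its three non-trivial elements satisfy $\sigma_1\sigma_2\sigma_3 = 1$, since in this group any two of them multiply to the third. Because $G$ acts on $S$, the assignment $g\mapsto \bar g$ is a group homomorphism from $G$ into $\End_{\Mrat}(t_2(S))^\times$ (the idempotent $\pi^\tr_2$ cutting out $t_2(S)$ is canonical, hence $G$-stable). Consequently
\[
\bar\sigma_1\circ\bar\sigma_2\circ\bar\sigma_3 = \mathrm{id}_{t_2(S)}.
\]
Substituting the three identities of the previous step, the left side equals $(-1)^3\cdot\mathrm{id}_{t_2(S)} = -\mathrm{id}_{t_2(S)}$, so $2\cdot\mathrm{id}_{t_2(S)} = 0$.

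Since $\Mrat(k)$ uses $\Q$-coefficients, $2$ is invertible in $\End_{\Mrat}(t_2(S))$, whence $\mathrm{id}_{t_2(S)} = 0$, forcing $t_2(S)=0$.

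The argument is essentially immediate; no serious obstacle appears. The only point warranting care is the second step, namely that composition of the motivic endomorphisms induced by the $\sigma_i$ respects the group law of $G$. This is because the graph correspondences compose as $\Gamma_{\sigma_i} \circ \Gamma_{\sigma_j} = \Gamma_{\sigma_i\sigma_j}$, and the projector onto $t_2(S)$ is $G$-equivariant, so $g\mapsto\bar g$ genuinely descends to a homomorphism $G\to\End_{\Mrat}(t_2(S))^\times$.
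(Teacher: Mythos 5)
Your proposal is correct and follows essentially the same route as the paper: identify $\bar\sigma_i=-\mathrm{id}$ on $t_2(S)$ from $t_2(Y_i)=t_2(S)^{\sigma_i}=0$, then use the relation among the three involutions of the bidouble cover to force $+\mathrm{id}=-\mathrm{id}$, hence $t_2(S)=0$ with $\Q$-coefficients. The only difference is cosmetic (you use $\sigma_1\sigma_2\sigma_3=1$ where the paper uses $\sigma_1\sigma_2=\sigma_3$), and you make explicit the final step, left implicit in the paper, that $2\cdot\mathrm{id}=0$ kills $t_2(S)$ over $\Q$.
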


\begin{proof} 
By Lemma \ref{t2=0.BC}, $t_2(S)^{\sigma}=t_2(Y_i)=0$. 
Thus each $\sigma_i$ acts as multiplication by $-1$ on $t_2(S)$. 
But since $\sigma_1\sigma_2=\sigma_3$,
$\sigma_3$ must act as multiplication by $(-1)^2=+1$.
\end{proof}

\bigskip
\section{Composed involutions}

If $S$ is a smooth minimal surface of general type, then the
linear system $|2K_S|$ determines the bicanonical map 
$\Phi_2:S\to\P^N$, where $N=\dim(2K_S)$. We say that $\Phi_2$ is 
{\it composed} with an involution $\sigma$ if
$\Phi_2$ factors through the map $S\to S/\sigma$.

Recall that $k$ denotes the number of isolated fixed points, and that 
$Y$ is a resolution of $S/\sigma$, as in \eqref{square:S-sigma}.
 
\begin{thm}\label{sigma-composed}
Let $S$ be a smooth minimal surface of general type with $p_g(S)=0$,
supporting an involution $\sigma$. Then
 \\
(1) $\Phi_2$ is composed with $\sigma$ iff $k=K_S^2+4$.

\goodbreak\noindent
If the bicanonical map $\Phi_2$ is composed with $\sigma$ then 
\\
(2) $S/\sigma$ is either rational or it is birational to an Enriques surface.
\\
(3) $-4\le K_Y^2\le0$, and $K_Y^2=0$ iff $K_Y$ is numerically zero.
\end{thm}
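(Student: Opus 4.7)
The engine driving all three parts is the $\sigma$-eigenspace decomposition of bicanonical sections. Since blowing up $S$ at isolated fixed points does not change plurigenera, $H^0(S,2K_S) \cong H^0(X,2K_X)$. For the double cover $\pi: X \to Y$ with $\pi_*\sO_X = \sO_Y \oplus L^{-1}$ and $K_X = \pi^*(K_Y+L)$, the projection formula yields
\[
H^0(X, 2K_X) \cong H^0(Y, 2K_Y+2L)\oplus H^0(Y, 2K_Y+L),
\]
with the summands being the $+1$ and $-1$ eigenspaces of $\sigma$.

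For (1), $\Phi_2$ is composed with $\sigma$ iff every bicanonical section is $\sigma$-invariant, iff the $-1$ eigenspace $H^0(Y, 2K_Y+L)$ vanishes. By Remark \ref{l(2K+L)} this dimension equals $(K_S^2+4-k)/2$, which is zero iff $k = K_S^2+4$.

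Assume $k = K_S^2+4$ for the rest. Three numerical identities follow: $K_X^2 = K_S^2-k = -4$ forces $(K_Y+L)^2 = -2$; the Noether-type formula $\chi(\sO_X) = 2\chi(\sO_Y) + \tfrac12 L\cdot(K_Y+L)$, together with $\chi(\sO_X) = 1 = \chi(\sO_Y)$ (the latter from $p_g(Y)=q(Y)=0$, which descends from $S$ via Lemma \ref{lem:t2^G} and the Hodge decomposition), gives $L\cdot(K_Y+L) = -2$; subtracting yields the crucial relation $K_Y\cdot(K_Y+L) = 0$. Combining with Lemma \ref{2L2=D2-k} also gives $K_Y^2 = 2+L^2 = (D^2-K_S^2)/2$.

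For (2), since $X$ is of general type, $K_X = \pi^*(K_Y+L)$ is big, so $K_Y+L$ is big. Together with $K_Y\cdot(K_Y+L)=0$, $p_g(Y)=q(Y)=0$, and the presence of $k\ge 5$ disjoint $(-2)$-curves $C_i\subset Y$ (on which $K_Y$ and $B'$ have degree zero while $L\cdot C_i = -1$), the Enriques--Kodaira classification should force the minimal model $W$ of $Y$ to be either rational or an Enriques surface. The main obstacle I anticipate is ruling out the intermediate Kodaira dimensions $\kappa(Y)\in\{1,2\}$: a naive Hodge-index argument is insufficient, since two big divisors on a blown-up surface can have zero intersection. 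A refined Zariski decomposition argument exploiting the $C_i$, or a direct appeal to the classification in \cite{CCM}, should close this gap. For (3), the bound $K_Y^2\le 0$ then splits by case: in the Enriques case $Y$ is $W$ blown up at $n\ge 0$ points with $K_Y^2=-n$, giving $K_Y^2\le 0$ with equality iff $n=0$ iff $K_Y\equiv 0$; in the rational case the $k\ge 5$ nodal curves already force enough blowups of $W$ to make $K_Y^2<0$. Finally, the lower bound $K_Y^2\ge -4$ reduces, via $K_Y^2=(D^2-K_S^2)/2$, to the inequality $D^2\ge K_S^2-8$, which I would derive by applying adjunction to the smooth fixed curve $D\subset S$ together with the constraint that $S$ is a minimal surface of general type.
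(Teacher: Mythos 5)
The paper offers no argument for this theorem at all: its ``proof'' is a citation of \cite[3.6(iv) and 3.7(iv,v)]{CCM}. So you are attempting something strictly more ambitious, and your preliminary reductions are sound and are in fact the numerology underlying \cite{CCM}: the eigenspace splitting $H^0(X,2K_X)\cong H^0(Y,2K_Y+2L)\oplus H^0(Y,2K_Y+L)$, the identities $(K_Y+L)^2=-2$, $L\cdot(K_Y+L)=-2$, $K_Y\cdot(K_Y+L)=0$, and $K_Y^2=(D^2-K_S^2)/2$ (the paper's $D^2=K_S^2+2K_Y^2$ in Corollary \ref{D^2 bound}). Two remarks on (1): being composed with $\sigma$ only forces $\sigma$ to act on $H^0(S,2K_S)$ as a scalar $\pm1$, so you must also dispose of the case where the \emph{invariant} summand vanishes (easy, using $k\ge4$ and the same dimension formula, but skipped); and Remark \ref{l(2K+L)} is itself quoted from \cite[3.3]{CCM}, so even part (1) is not independent of that source.

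The genuine gap is exactly where you flag it, and it is the heart of the theorem. For (2), the numerics you have assembled ($K_Y\cdot(K_Y+L)=0$, $(K_Y+L)^2=-2$, $p_g(Y)=q(Y)=0$, the nodal curves $C_i$) cannot exclude $\kappa(Y)=1,2$: quotients of general type and of Kodaira dimension $1$ genuinely occur for involutions on such surfaces (see Theorem \ref{not-composed} and the examples following it), so any proof must exploit the vanishing $h^0(2K_Y+L)=0$ geometrically, which is precisely what \cite{CCM} does; ``a refined Zariski decomposition should close this gap'' is a hope, and ``a direct appeal to CCM'' simply reproduces the paper's proof. In (3) there are two further problems. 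For the lower bound, adjunction gives $D^2+K_S\cdot D=2\sum(g_i-1)$, which bounds $D^2$ from below only if you can bound the genera of the fixed curves; the real input is the trace bound $2-D^2=\trace(\sigma|H^2(S,\Q))\le\rho(S)=10-K_S^2$ (Corollary \ref{D^2 bound}, via \cite[4.2]{DMP}), or equivalently $\rho(Y)\le\rho(X)=14$ with $\rho(Y)=10-K_Y^2$ (Remark \ref{rem:rhoY}); ``adjunction plus minimality'' alone does not give $D^2\ge K_S^2-8$. For the upper bound, your rational case rests on the unproved claim that the nodal curves force $K_Y^2<0$: disjointness only gives $\rho(Y)\ge k+1$, i.e.\ $K_Y^2\le 5-K_S^2$, which is not even $\le0$ when $K_S^2\le4$. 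A cleaner route, independent of (2): since $k=K_S\cdot D+4$, the hypothesis $k=K_S^2+4$ gives $K_S\cdot D=K_S^2$, and the Hodge index inequality $(K_S\cdot D)^2\ge K_S^2D^2$ combined with $D^2=K_S^2+2K_Y^2$ yields $K_Y^2\le0$ directly (with equality forcing $D\equiv K_S$); the characterization of $K_Y^2=0$ by numerical triviality of $K_Y$ still needs (2) or \cite{CCM}.
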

 
\begin{proof} 
This is proven in \cite[3.6(iv) and 3.7(iv,v)]{CCM}.
\end{proof}

\begin{cor} %3.2
Let $S$ be a minimal surface of general type with $p_g(S)=0$ and 
let $X$ be the image of the bicanonical map $\Phi_2$. If $\Phi_2:S\to X$
is a bidouble cover, then $S$ satisfies Bloch's conjecture.
\end{cor}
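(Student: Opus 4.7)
The plan is to assemble the corollary from three results already in this section: the trichotomy for quotients by an involution with which $\Phi_2$ is composed (Theorem \ref{sigma-composed}(2)), the vanishing criterion for $t_2(Y)$ in the remark following Proposition \ref{old nice thm}, and the bidouble-cover principle (Theorem \ref{3-involutions}). The final step will be Lemma \ref{t2=0.BC}, which converts $t_2(S)=0$ into Bloch's conjecture.

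First I would set up the three involutions. Since $\Phi_2:S\to X$ is a bidouble cover, the Galois group $G=\Z/2\times\Z/2$ acts on $S$ with $X=S/G$, and the three nontrivial elements $\sigma_1,\sigma_2,\sigma_3$ are involutions on $S$ as in Definition \ref{def:bidouble}. For each $i$, the quotient $S\to S/\sigma_i$ fits into the factorization $S\to S/\sigma_i \to S/G=X$, so $\Phi_2$ is composed with every $\sigma_i$ in the sense of this section.

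Next I would apply Theorem \ref{sigma-composed}(2) to each $\sigma_i$: because $\Phi_2$ is composed with $\sigma_i$, the surface $S/\sigma_i$ is either rational or birational to an Enriques surface. Letting $Y_i$ denote the desingularization of $S/\sigma_i$ as in diagram \eqref{square:S-sigma}, the minimal model of $Y_i$ is therefore rational or Enriques, and the remark immediately after Proposition \ref{old nice thm} gives $t_2(Y_i)=0$. With this vanishing in hand for $i=1,2,3$, Theorem \ref{3-involutions} applies (the hypothesis that $S$ is the smooth minimal model of a bidouble cover is exactly our setup, with $\Phi_2$ playing the role of the cover) and yields $t_2(S)=0$. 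Since $p_g(S)=0$, Lemma \ref{t2=0.BC} then gives Bloch's conjecture for $S$.

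I do not expect a real obstacle: the only point requiring care is the verification that $\Phi_2$ is composed with each of the three nontrivial involutions of the bidouble cover, which is immediate from the factorization $S\to S/\sigma_i \to S/G$. Everything else is a direct citation of the results already established in the preceding sections.
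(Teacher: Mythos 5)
Your proposal is correct and follows essentially the same route as the paper: observe that all three nontrivial involutions $\sigma_i$ of the bidouble cover are composed with $\Phi_2$, invoke Theorem \ref{sigma-composed} to see each $S/\sigma_i$ is rational or birational to an Enriques surface (hence $t_2(Y_i)=0$), and conclude $t_2(S)=0$ by Theorem \ref{3-involutions} and Bloch's conjecture by Lemma \ref{t2=0.BC}. Your explicit factorization $S\to S/\sigma_i\to S/G=X$ and the appeal to the remark after Proposition \ref{old nice thm} merely spell out steps the paper leaves implicit.
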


\begin{proof} 
The assumption that the image $X$ of the bicanonical map of $S$ 
is a surface and that $\Phi_2$ is a morphism imply that $K^2_S\ge2$ 
(see \cite[2.1]{MP02}) and that the degree of $\Phi_2$ is $\le8$. 
If $\Phi_2$ is a bidouble cover then all three nontrivial involutions
$\sigma_i$ of $S$ (see \ref{def:bidouble}) are composed with the bicanonical 
map. By Theorem \ref{sigma-composed}, $t_2(S/\sigma_i)=0$ so
Theorem \ref{3-involutions} implies that $t_2(S)=0$. 
That is, $S$ satisfies Bloch's conjecture.
\end{proof}

Recall that $\rho(S)$ denotes the rank of the N\'eron-Severi group $NS(S)$.
 
\begin{cor}\label{D^2 bound}
Let $S$ be a minimal complex surface of general type with $p_g(S)=0$
and an involution $\sigma$. Let $D$ be the 1-dimensional part of the 
fixed locus $S^{\sigma}$. Then
 \\
(1) $\sigma$ acts as the identity on $H^2(S,\Q)$ iff $D^2=K^2_S-8$.
\\
(2) If the bicanonical map is composed with $\sigma$ and $X,Y$ are 
as in diagram \eqref{square:S-sigma}, then $D^2=K^2_S+2K^2_Y$ and
\[
K^2_S-8 \le D^2 \le K^2_S.
\]
When $D^2=K^2_S$, $S/\sigma$ is birational to an Enriques surface.
\end{cor}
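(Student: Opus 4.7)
The plan is to derive a universal numerical identity relating $D^2$, $K_S^2$, $K_Y^2$, and $k$ from the double-cover geometry of diagram \eqref{square:S-sigma}, and then to specialize it: for part (1) via the topological Lefschetz fixed-point formula, and for part (2) via the equality $k = K_S^2 + 4$ from Theorem \ref{sigma-composed}(1). Throughout, $e(\cdot)$ will denote topological Euler characteristic.

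First I would assemble three auxiliary relations. From $K_X = \pi^*(K_Y + L)$, the blow-up identity $K_X^2 = K_S^2 - k$, and Lemma \ref{2L2=D2-k}, one obtains $K_S^2 = 2K_Y^2 + 4 K_Y\cdot L + D^2$. Next, set $R = \pi^*L$; since $2L \equiv B$ this is the ramification divisor, equal as a divisor on $X$ to the strict transform of $D$ plus the $k$ exceptional curves $E_i$. Computing $K_X \cdot R$ two ways---once as $2(K_Y+L)\cdot L$ via $K_X = \pi^*(K_Y+L)$, and once as $K_S\cdot D - k$ via $K_X = h^*K_S + \sum E_i$---yields $K_S\cdot D = 2 K_Y\cdot L + D^2$. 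Eliminating $K_Y\cdot L$ between these two relations gives
\[
K_S\cdot D = \tfrac{1}{2}\bigl(K_S^2 - 2K_Y^2 + D^2\bigr).
\]
The third ingredient is the double-cover formula $e(Y) = (e(X) + e(B))/2$. Combined with $e(S) = 12 - K_S^2$ (Noether's formula), the analogous equality $e(Y) = 12 - K_Y^2$, and $e(B) = e(D) + 2k$ (since $B$ is the disjoint union of a copy of $D$ and $k$ smooth rational curves), this produces
\[
e(D) = 12 + K_S^2 - 2K_Y^2 - 3k.
\]
Inserting the adjunction identity $e(D) = -D^2 - K_S\cdot D$ together with the previous displayed formula for $K_S\cdot D$ and eliminating $e(D)$, routine algebra yields the universal relation
\[
2k = 8 + K_S^2 - 2K_Y^2 + D^2. \qquad (\star)
\]

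For part (1), the topological Lefschetz fixed-point formula applied to $\sigma$ on $S$ gives $e(S^\sigma) = 2 + \trace(\sigma^*\mid H^2(S,\Q))$, since $b_1(S) = b_3(S) = 0$. Because $S^\sigma = D \sqcup \{P_1,\dots,P_k\}$ and $b_2(S) = 10 - K_S^2$ by Lemma \ref{rho+K2}, the condition that $\sigma$ acts as the identity on $H^2(S,\Q)$ is equivalent to $e(D) + k = 12 - K_S^2$. Substituting $K_S\cdot D = \tfrac{1}{2}(K_S^2 - 2K_Y^2 + D^2)$ into the adjunction formula and rearranging converts this into $2k = 24 - K_S^2 - 2K_Y^2 + 3D^2$, which when compared with $(\star)$ collapses to $D^2 = K_S^2 - 8$.

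For part (2), the assumption that $\Phi_2$ is composed with $\sigma$ gives $k = K_S^2 + 4$ by Theorem \ref{sigma-composed}(1); plugging into $(\star)$ immediately yields $D^2 = K_S^2 + 2K_Y^2$. The bound $-4 \le K_Y^2 \le 0$ from Theorem \ref{sigma-composed}(3) then gives $K_S^2 - 8 \le D^2 \le K_S^2$. Finally, $D^2 = K_S^2$ forces $K_Y^2 = 0$, whence $K_Y$ is numerically trivial by Theorem \ref{sigma-composed}(3); since $p_g(Y) = q(Y) = 0$, the minimal model of $Y$ is then an Enriques surface, so $S/\sigma$ is birational to one. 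The work here is essentially bookkeeping, with no conceptually hard step; the one point needing care is the identification $\pi^*L = D + \sum E_i$ on $X$ and the resulting intersection computations used in deriving $(\star)$.
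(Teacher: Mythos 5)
Your proof is correct, but it takes a genuinely more self-contained route than the paper. The paper disposes of (1) by quoting \cite[4.2]{DMP} for the identity $\trace(\sigma^*\mid H^2(S,\Q))=2-D^2$ and comparing with $\rho(S)=10-K_S^2$, and of (2) by quoting \cite[3.3(ii), 3.6(iii)]{CCM} for $L^2+2=-K_Y\cdot L=K_Y^2$ and combining this with Lemma \ref{2L2=D2-k} and $k=K_S^2+4$. You instead extract a single universal relation $2k=8+K_S^2-2K_Y^2+D^2$ from the double-cover and blow-up numerics ($K_X=\pi^*(K_Y+L)$, $K_X^2=K_S^2-k$, Lemma \ref{2L2=D2-k}, the Euler-number formula for a smooth double cover, and Noether's formula on $S$ and $Y$), and then specialize: part (2) needs only $k=K_S^2+4$ from Theorem \ref{sigma-composed}(1), so you bypass the CCM input $-K_Y\cdot L=K_Y^2$ altogether, while part (1) follows from the topological Lefschetz fixed-point formula, which in effect reproves the DMP trace formula (your relations are equivalent to $K_S\cdot D=k-4$ and $t=2-D^2$). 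Your identity is consistent with the paper's conclusions (it reproduces $D^2=K_S^2+2K_Y^2$ and the examples with $D=0$, $k=4$, $K_W^2=4$), and your closing step for the Enriques statement (a surface with $K_Y$ numerically trivial is minimal, and a minimal surface with $K\equiv0$, $p_g=q=0$ is Enriques) is a harmless variant of the paper's appeal to Theorem \ref{sigma-composed}(2). What the paper's citations buy is brevity; what your computation buys is independence from \cite{DMP} and from the bicanonical-specific lemma in \cite{CCM}. Two small points you should make explicit: the use of $e(Y)=12-K_Y^2$ requires $\chi(\sO_Y)=1$, i.e.\ $p_g(Y)=q(Y)=0$ (noted in Remark \ref{rem:rhoY}, and needed again at the end), together with $e(X)=e(S)+k$; and in part (1) one should say that, $\sigma^*$ being an involution with eigenvalues $\pm1$, its trace equals $b_2(S)=10-K_S^2$ exactly when it is the identity.
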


\begin{proof} 
Let $t$ denote the trace of $\sigma$ on $H^2(S,\Q)\cong NS(S)_{\Q}$; 
it is at most $\rho(S)= \dim H^2(S,\Q)$. By \cite[4.2]{DMP} $t=2-D^2$.
Since $\rho(S)=10-K^2_S$ by \ref{rho+K2}, we deduce that $D^2\ge K^2_S-8$,
with equality iff $t=\rho(S)$, i.e., iff $\sigma$ acts as the identity 
on $H^2(S,\Q)$. 

Assume now that $\sigma$ is composed with the bicanonical map.
By Theorem \ref{sigma-composed}, $\sigma$ has $k=K^2_S+4$
fixed points. By \cite[3.3(ii) and 3.6(iii)]{CCM}, 
the line bundle $L$ introduced after \eqref{square:S-sigma} satisfies 
$L^2+2=-K_Y\cdot L=+K_Y^2$.
We also have $2 L^2=D^2-k$, by Lemma \ref{2L2=D2-k}.
Therefore
\[
D^2= K^2_S+ 2K^2_Y.
\]
Since $-4\le K^2_Y\le 0$ by Theorem \ref{sigma-composed}, 
we get $K^2_S-8 \le D^2 \le K^2_S$.
Theorem \ref{sigma-composed} also implies that if $K^2_Y=0$ then 
(since $Y$ is not rational) the minimal model of $Y$ 
is an Enriques surface, i.e., $2K_Y=0$. 
\end{proof}

\begin{subrem}\label{rem:rhoY}
From Lemma \ref{rho+K2}, $\rho(X)=\rho(S)+k$ equals $10-K^2_S+k$.
If the bicanonical map is composed with $\sigma$ then $k=K^2_S+4$ 
by \ref{sigma-composed}, so we get $\rho(X)=14$. Since $Y=X/\sigma$, 
we have $p_g(Y)=q(Y)=0$, $NS(Y)_{\Q}=NS(X)_{\Q}^\sigma$ and hence
$\rho(Y)\le\rho(X)$. Since $K_Y^2\le0$ and
$\rho(Y)=10-K_Y^2$ we get the bounds
\[ 10 \le \rho(Y) \le 14. \]
The equality $\rho(Y)=14=\rho(X)$ corresponds to the case $K^2_Y=-4$,
in which case $D^2=K^2_S-8$. 
\end{subrem}

\begin{ex}[$K^2_S=8$] 
Let $S$ be a minimal surface of general type with $p_g(S)=0$ and 
$K^2_S=8$, having an involution $\sigma$. 
Then $\rho(S)=10-K^2_S=2$ (Lemma \ref{rho+K2}), and 
the Hodge Index Theorem implies that
the involution $\sigma$ acts as the identity on $H^2(S,\Q)$.
By Corollary \ref{D^2 bound}, $D^2=K^2_S-8=0$.

The following facts are established in \cite[4.4]{DMP}. 
The number $k$ of the fixed points is one of 4,6,8,10,12. 
The bicanonical map is composed with $\sigma$ iff $k=12$
and in that case $Y=X/\sigma$ is rational (see \ref{sigma-composed}).
If $k=10$ then $Y$ is a rational surface and $\rho(Y)=\rho(X)=12$.
If $k=8$ the Kodaira dimension of $Y$ is 1. Finally, if $k=4,6$ the
quotient surface $Y$ is of general type.
\end{ex} 

\medskip 

Carlos Rito has analyzed the situation where $\Phi_2$ is 
{\it not} composed with $\sigma$. We extract the following result from
\cite[Thm.\,2]{Ri2}. Given a surface $S$ with involution, we construct
a resolution $Y\to S/\sigma$ and $X\to Y$ with smooth branch locus $B$ 
as in \eqref{square:S-sigma}; by blowing down curves on $Y$, we obtain a 
minimal resolution $W$ of $S/\sigma$. Let $\bar B$ denote the image
of $B$ under the proper map $Y\to W$.

\begin{thm}\label{not-composed} [Rito]
Let $S$ be a smooth minimal surface of general type with $p_g(S)=0$,
and $\sigma$ an involution on $S$ with $k$ isolated fixed points.
Suppose that the bicanonical map is not composed with $\sigma$. 
Let $W$ be a minimal model of the resolution $Y$ of $S/\sigma$ and 
let $\bar B$ be as above. If $W$ has Kodaira dimension 2, then
one of the following holds:
\begin{enumerate}
\item $K_S^2=2K_W^2$, and $\bar B$ is a disjoint union of 4 nodal curves;
\item $4\le K_S^2\le8$, $K_W^2=1$, $\bar B^2=-12$, $K_W\cdot\bar B=2$, 
\\ $k=K_S^2$ and $\bar B$ has at most one double point;
\item $6\le K_S^2\le8$, $K_W^2=2$, $\bar B^2=-12$, $K_W\cdot\bar B=2$, 
\\ $k=K_S^2-2$ and $\bar B$ is smooth.
\end{enumerate}
\end{thm}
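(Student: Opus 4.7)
The plan is to combine the numerics of the double cover $\pi:X\to Y$ with the minimal-model structure of $Y$, using that $\kappa(W)=2$ forces $K_W$ to be nef and big. I would first collect the identities coming from the double cover with smooth branch locus $B=B'+\sum C_i$ and $2L\equiv B$: the ramification formula gives $K_X=\pi^*(K_Y+L)$, hence $K_X^2=2(K_Y+L)^2$, while $X\to S$ is the blow-up at $k$ points, so $K_X^2=K_S^2-k$. Combining these with $2L^2=D^2-k$ from Lemma \ref{2L2=D2-k} and the adjunction formula for the smooth curve $B$ expresses $K_Y^2$, $K_Y\cdot L$ and $L^2$ in terms of $K_S^2$, $k$, $D^2$ and $p_a(B')$.

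Next I would pass to the minimal model $\mu:Y\to W$ by contracting a sequence of $(-1)$-curves. Writing $K_Y=\mu^*K_W+\mathcal E$ for the effective exceptional divisor, one gets $K_Y^2=K_W^2-r$, where $r$ counts the blow-ups, and $\bar B=\mu_*B$ satisfies $K_W\cdot\bar B=K_Y\cdot B-\sum m_j$ and $\bar B^2=B^2+\sum m_j^2$ for appropriate multiplicities $m_j$ at the centers of $\mu$. Since $K_W$ is nef and big, $K_W^2\ge1$, $K_W\cdot\bar B\ge0$, and the Hodge index inequality $(K_W\cdot\bar B)^2\ge K_W^2\cdot\bar B^2$ applies whenever $\bar B^2>0$.

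The non-composed hypothesis enters next. By Theorem \ref{sigma-composed}(1), non-composed means $k\ne K_S^2+4$; more substantively, the $\sigma$-anti-invariant part of $H^0(S,2K_S)$ is nonzero, and since $\pi_*\mathcal O_X=\mathcal O_Y\oplus L^{-1}$ this anti-invariant part is $H^0(Y,2K_Y+L)$. Thus non-composed supplies $h^0(Y,2K_Y+L)>0$, which gives an upper bound on $K_Y\cdot L$ in terms of $K_Y^2$, reversing the equality $L^2+2=-K_Y\cdot L=K_Y^2$ that held in the composed case of \cite{CCM}. Substituting this back into the identities of the first step forces $K_W^2$ into a very small range and likewise constrains $\bar B^2$ and $K_W\cdot\bar B$.

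The final and hardest step is the case analysis, and I expect this to be the main obstacle. Each nodal curve $C_i\subset B$ either survives on $W$ as a $(-2)$-curve or is contracted by $\mu$, contributing to the singularities of $\bar B$, and likewise for the components of $B'$; one must trace how these contractions interact with the bounds on $K_W^2$, $K_W\cdot\bar B$, $\bar B^2$ and with adjunction for $\bar B$ on $W$. Following Rito, the bookkeeping should collapse to exactly three configurations: case (1) in which $\bar B$ decouples into four disjoint nodal curves, giving $K_S^2=2K_W^2$ with no further constraint on $K_W^2$; and cases (2) and (3) in which most of $B$ is contracted, forcing $K_W^2\in\{1,2\}$ together with the prescribed values $\bar B^2=-12$, $K_W\cdot\bar B=2$ and the listed constraints on $k$ and on the singularities of $\bar B$.
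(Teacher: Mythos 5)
First, note that the paper itself does not prove this statement: it is quoted verbatim from Rito (\cite[Thm.\,2]{Ri2}), with the sentence ``We extract the following result from...'' serving as the entire justification. So any genuine proof would have to reproduce Rito's argument, and your proposal should be judged on whether it actually does that. Your first two steps are sound and standard: the double-cover identities $K_X=\pi^*(K_Y+L)$, $K_X^2=K_S^2-k$, $2L^2=D^2-k$, and the blow-down bookkeeping $K_Y^2=K_W^2-r$, $K_W\cdot\bar B=K_Y\cdot B-\sum m_j$, $\bar B^2=B^2+\sum m_j^2$ are all correct, as is the identification of the anti-invariant bicanonical sections with $H^0(Y,2K_Y+L)$ (consistent with Remark \ref{l(2K+L)} and Theorem \ref{sigma-composed}(1), since composed is equivalent to $k=K_S^2+4$, i.e.\ to the vanishing of that system).

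The gap is in the last two steps, which are exactly where the theorem lives. You assert that $h^0(Y,2K_Y+L)>0$ ``gives an upper bound on $K_Y\cdot L$ in terms of $K_Y^2$'' and ``forces $K_W^2$ into a very small range,'' but no such inequality is derived, and it is not clear what replaces the equality $L^2+2=-K_Y\cdot L=K_Y^2$ of the composed case; the whole difficulty is to convert effectivity of $2K_Y+L$ on a surface $W$ of general type into sharp numerical constraints (Rito does this by a careful analysis combining nefness of $K_W$, the index theorem, and the structure of the contracted curves). More seriously, the final case analysis -- the derivation of the exact trichotomy $K_S^2=2K_W^2$ with $\bar B$ four disjoint nodal curves, versus $K_W^2\in\{1,2\}$ with $\bar B^2=-12$, $K_W\cdot\bar B=2$, $k=K_S^2$ or $K_S^2-2$, the ranges of $K_S^2$, and the singularity restrictions on $\bar B$ -- is not carried out at all; you write that ``following Rito, the bookkeeping should collapse to exactly three configurations,'' which appeals to the very result being proved and is therefore circular. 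As it stands the proposal is a plausible setup for Rito's proof, not a proof: the quantitative heart of the statement is assumed rather than established.
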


\begin{cor}\label{S/s if K2=3}
Let $S$ be a smooth minimal surface of general type with $p_g(S)=0$,
having a nontrivial involution $\sigma$. If $K_S^2=3$ then the 
resolution $Y$ of $S/\sigma$ is not of general type. 
In particular, $S/\sigma$ satisfies Bloch's conjecture.
\end{cor}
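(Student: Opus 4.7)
The plan is to split into cases according to whether the bicanonical map $\Phi_2$ of $S$ is composed with $\sigma$ or not, and in each case show that the resolution $Y$ of $S/\sigma$ cannot be of general type; once that is done, the conjectural vanishing of $t_2(Y)$ follows from the remark immediately after Proposition \ref{old nice thm} (citing \cite{GP}), which together with Lemma \ref{t2=0.BC} yields Bloch's conjecture for $S/\sigma$.

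In the first case, where $\Phi_2$ is composed with $\sigma$, Theorem \ref{sigma-composed}(2) directly says that $S/\sigma$ is either rational or birational to an Enriques surface, so in particular $Y$ has Kodaira dimension at most $0$ and is certainly not of general type.

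In the second case, where $\Phi_2$ is not composed with $\sigma$, I would apply Rito's theorem (Theorem \ref{not-composed}) to a minimal model $W$ of $Y$. Suppose, toward a contradiction, that $W$ has Kodaira dimension $2$; then one of the three numerical conclusions of that theorem must hold. In alternative (1), $K_S^2=2K_W^2$ forces $K_W^2=3/2$, which is not an integer, a contradiction. Alternatives (2) and (3) require $K_S^2\ge4$ and $K_S^2\ge6$ respectively, both incompatible with $K_S^2=3$. Thus $W$ must have Kodaira dimension at most $1$, so $Y$ is not of general type.

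Finally, since $Y$ has $p_g(Y)=q(Y)=0$ (being dominated by $X$, which has the same Hodge numbers as $S$), and $Y$ is not of general type, its minimal model $W$ is either rational, an Enriques surface, or a properly elliptic surface of Kodaira dimension $1$. In each of these cases $t_2(Y)=0$ by the remark after Proposition \ref{old nice thm}, and by Lemma \ref{t2=0.BC} this is equivalent to Bloch's conjecture for $Y$ (equivalently for $S/\sigma$). The main obstacle is really just checking that the numerical list in Rito's theorem leaves no room for $K_S^2=3$; everything else is routine bookkeeping through results already assembled in the excerpt.
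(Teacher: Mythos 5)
Your proposal is correct and follows essentially the same route as the paper: split on whether $\Phi_2$ is composed with $\sigma$, use Theorem \ref{sigma-composed}(2) in the composed case, and rule out Kodaira dimension $2$ via the numerical alternatives of Theorem \ref{not-composed} (the paper leaves the parity/range check $K_S^2=2K_W^2$, $4\le K_S^2$, $6\le K_S^2$ implicit, which you spell out). Your concluding step, deducing Bloch's conjecture for $S/\sigma$ from $t_2(Y)=0$ via the remark citing \cite{GP} and Lemma \ref{t2=0.BC}, is a fine way to justify the final clause that the paper does not elaborate.
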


\begin{proof}
If the bicanonical map of $S$ is composed with $\sigma$, then
Theorem \ref{sigma-composed}(2) states that 
$Y$ is either rational or its minimal model is an Enriques surface.
If the bicanonical map of $S$ is {\it not} composed with $\sigma$,
and $Y$ is of general type (has Kodaira dimension 2),
then Theorem \ref{not-composed} shows that $K_S^2$ cannot be 3.
\end{proof}

\begin{ex} 
Here are some examples in which the minimal model $W$ of $S/\sigma$
is of general type, and the divisorial part $D$ of $S^\sigma$ satisfies $D=0$.
Let $k$ be the number of isolated fixed points of $\sigma$. 
\\
1) The Barlow surface $W$ in \cite{Bar} is of general  
type with $K^2_W=1$ and $p_g(W)=0$. It is the minimal model
of $S/\sigma$, where $S$ is a Catanese surface with $K^2_S=2$, 
by factoring with an involution $\sigma$ which has only 4 isolated 
fixed points. Hence $D=0$ so that $\bar B= \sum_{1\le i\le k}C_i$ 
is the disjoint union of 4 nodal curves. Therefore $W$ 
satisfies condition (1) of Theorem \ref{not-composed}. 
In \cite{Bar} it is proved that $t_2(W)=0$.
\\
2) Another example comes from \cite[Ex.\,4.3]{MP01a};
cf.\ \cite[4.4(i)]{DMP}. 
Using Pardini's method for the group $G=(\Z/2)^4$ 
(as in Lemma \ref{Pardini's cover} below),
one first constructs two smooth $G$-covers $C_1$, $C_2$ of $\P^1$ 
of genus 5 as in \cite [Ex.\,4.3]{MP01a}. The authors choose a subgroup
$\Gamma$ of $G\times G$ with $|\Gamma|=16$ acting freely
on $C_1\times C_2$; the quotient $S=(C_1\times C_2)/\Gamma$ 
is the desired minimal surface of general type with 
$K^2_S =8$ and $p_g(S)=q(S)=0$.

For this $S$, it is shown in {\it loc.\,cit.}\ that the bicanonical
map is birational, hence not composed with any involution.
The construction of $S$ starts with a basis %$\{ g_1,\dots,g_4\}$ 
of $G$ and constructs the $G$-covers $C_i\to\P^1$ so that any of the
11 other nontrivial elements of $G$ acts freely on both curves, 
and acts on $S$ (via the diagonal action) with $D=0$ and 
$k=K_S\cdot D+4=4$ fixed points. 

Since $D=0$ and $k=4$, a minimal model $W$ of $S/\sigma$ has $K^2_W=4$
by \cite[4.4(i)]{DMP}.
Thus $W$ satisfies condition (1) of Theorem \ref{not-composed}.
In fact, the branch locus of $X\to Y$ is the disjoint union of 
the 4 lines over the fixed points of $\sigma$, so
$\bar B$ is the disjoint union of the 4 nodal curves $C_i$. 

Since the motives $h(C_i)$ are finite dimensional, so are 
$h(C_1\times C_2)$ and $h(S)=h(C_1\times_G C_2)$. 
Hence $t_2(S)$ is also finite dimensional. As $p_g(S)=0$,
Lemma \ref{t2=0.BC} implies that $S$ satisfies Bloch's conjecture.
\end{ex}

%3) An example with $D \ne 0$ is given in \cite[Ex.\,4.3]{MP01a}; see also
%\edit{what's wrong with\\ Example 3)?}
%\cite[4.4(ii)]{DMP}. $S$ is a surface of general type with $K^2_S=8$, 
%$\sigma$ is an involution with $D^2=0$, $D\ne0$ and 
%there are $k=6$ isolated fixed points. We also have
%$K^2_W=2$, $k+2=K^2_S$ and $\bar B^2=2D^2-2k=-12$. Hence
%$W $ satisfies condition (3) in Theorem \ref{not-composed}.
%Bloch's conjecture also holds in this case, so that $t_2(S)=t_2(W)=0$.
 
\goodbreak 
\section{Numerical Godeaux surfaces ($K^2=1$)}\label{sec:K2=1}

In the next few sections, we will give examples of complex surfaces
$S$ of general type, having an involution $\sigma$, with $p_g(S)=0$
and $3\le K^2_S\le7$, for which Bloch's conjecture holds. In this
section and the next we deal with the cases $K^2_S=1,2$.

Complex surfaces of general type with $p_g=q=0$ and $K^2_S=1$ are called 
{\it numerical Godeaux surfaces}. Examples of Godeaux surfaces are obtained
as quotients of a Catanese surface under the action of a finite
group. (A Catanese surface is a minimal surface $V$ of general type
with $p_g(V)=0$ and $ K^2_V=2$).

Barlow gave two such examples of numerical Godeaux surfaces $S$ 
in the 1980's, both with with $\pi_1(S)=\Z/5$ and $\pi_1(S)=\{1\}$, 
and showed in \cite{Bar} that Bloch's conjecture holds for these $S$.

By a {\it typical Godeaux surface} we mean the quotient surface of 
the Fermat quintic in $\P^3$ by a cyclic group of order 5. 
It has $\rho(S)=9$ and $K_S^2=1$.  By
\cite{GP1}, $S$ has a finite dimensional motive, so it satisfies
Bloch's conjecture. This case was also handled in \cite{IM}.

We now turn to numerical Godeaux surfaces with an involution $\sigma$.
These were classified in \cite{CCM}; if $S/\sigma$ is rational 
they are either of {\it Campedelli type} or {\it Du Val type}; 
if $S/\sigma$ is an Enriques surface they are of {\it Enriques type}.
This exhausts all cases, since (by \cite[4.5]{CCM}) the
bicanonical map is composed with $\sigma$ and $\sigma$ has exactly
$k=5$ fixed points; as noted in Theorem \ref{sigma-composed}, this
implies that $S/\sigma$ is either rational or birational to an
Enriques surface.

If $S$ is of Du Val type, there is an \'etale double cover $\tilde S$ 
of $S$ with $p_g(\tilde S)=1$ and $K^2=2$; see \cite[8.1]{CCM}. 
Since $\pi_1(\tilde S)$ is the kernel of $\pi_1(S)\to\Z/2$, 
we also have $q(\tilde S)=0$. In addition, $\Pic(\tilde S)=NS(\tilde S)$ 
is either torsion free or has a torsion subgroup of order 2.

\begin{thm}
Let $S$ be a numerical Godeaux surface of Du Val type such that 
the torsion subgroup of $\Pic(S)$ has order 2. Let $\tilde S \to S$ 
be the \'etale double cover of $S$ associated to the non trivial
2-torsion element in $\Pic(S)$. If $\tilde S$ has bicanonical map of
degree 4, then $S$ satisfies Bloch's conjecture.
\end{thm}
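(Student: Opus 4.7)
The strategy is to lift Bloch's conjecture from $S$ to a finite-dimensionality statement on the étale cover $\tilde S$, using the deck involution $\tau$ together with the extra involutions produced by the degree-$4$ bicanonical map.

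Let $\tau$ denote the deck involution of $\tilde S\to S$; it is fixed-point free with quotient $S$. By Example~\ref{ex:Y=Xsigma} and Lemma~\ref{lem:t2^G}, $t_2(S)=t_2(\tilde S)^\tau$, exhibiting $t_2(S)$ as a direct summand of $t_2(\tilde S)$. Since $q(\tilde S)=0$, Proposition~\ref{old nice thm}(ii) (applied to $\tilde S$ with involution $\tau$ and quotient $S$) identifies $t_2(S)=0$ with $\bar\tau=-1$ on $t_2(\tilde S)$. Combined with $p_g(S)=0$ and Lemma~\ref{t2=0.BC}, it suffices to prove that $t_2(\tilde S)$ is finite dimensional: then its direct summand $t_2(S)$ is also finite dimensional, and Lemma~\ref{t2=0.BC} forces $t_2(S)=0$.

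To control $t_2(\tilde S)$, I would exploit the hypothesis that the bicanonical map has degree~$4$. Riemann--Roch gives $h^0(2K_{\tilde S})=4$, with the étale decomposition $H^0(2K_{\tilde S})=H^0(2K_S)\oplus H^0(2K_S\otimes\mathcal L^{-1})$ into two $\tau$-eigenspaces of dimension $2$; thus $\tau$ does not lie in the Galois group of $\Phi_2$. Because $\Phi_2$ sends $\tilde S$ onto a quadric $Q$ in $\P^3$ (degree $(2K_{\tilde S})^2/\deg\Phi_2=2$), the cover $\tilde S\to Q$ is birationally $(\Z/2)^2$-Galois, producing commuting involutions $\iota_1,\iota_2$ of $\tilde S$ with $\iota_3=\iota_1\iota_2$. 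The $\tau$-equivariance of the bicanonical decomposition then forces $\tau$ to commute with each $\iota_j$.

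Let $W_j$ be the minimal model of $\tilde S/\iota_j$. Since $\Phi_2$ factors through $\tilde S/\langle\iota_1,\iota_2\rangle\cong Q$ (rational), each $K_{W_j}$ is numerically trivial; analysing the $\iota_j$-action on $H^{2,0}(\tilde S)=\C$ shows $W_j$ is either an Enriques surface (whence $t_2(W_j)=0$ by \cite[7.6.11]{KMP}) or a K3. In the K3 case, the explicit Du Val Godeaux construction of \cite{CCM}, combined with the degree-$4$ bicanonical hypothesis, constrains the Picard number of $W_j$ to be $20$, so $W_j$ is a Shioda--Inose (Kummer) K3 and has finite-dimensional motive. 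With each $t_2(W_j)=t_2(\tilde S)^{\iota_j}$ finite dimensional, the isotypic decomposition of $t_2(\tilde S)$ under the $(\Z/2)^2$-action makes every character subspace finite dimensional (each non-trivial character appears inside exactly one $t_2(W_j)$, while the trivial character appears in all three); hence $t_2(\tilde S)$ is finite dimensional, completing the argument via the reduction above.

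The main obstacle will be the K3 case in the third paragraph: establishing that any K3 quotient among the $W_j$ has finite-dimensional motive by identifying it as Kummer. This requires tracing the branch configuration of the bidouble cover through the explicit \cite{CCM} description of Du Val Godeaux surfaces with $2$-torsion in $\Pic$. A secondary technical point is confirming that $\tilde S\to Q$ is genuinely $(\Z/2)^2$-Galois rather than $\Z/4$-cyclic, and that $\tau$ commutes with each $\iota_j$; both should follow cleanly from the $\tau$-equivariance of the bicanonical data.
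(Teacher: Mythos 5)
Your overall skeleton matches the paper's: pass to the \'etale double cover $\tilde S$ (which has $p_g=1$, $q=0$, $K^2=2$), use the degree-$4$ bicanonical map onto a quadric to get a bidouble-cover structure, kill the transcendental motive coming from the rational quotients, and handle the remaining K3 quotient by showing its motive is finite dimensional. But the proposal has a genuine gap precisely at the step you yourself flag as ``the main obstacle'': you never actually prove that the K3 quotient has finite-dimensional motive, and the route you suggest (tracing the explicit \cite{CCM} construction to identify it as a Kummer/Shioda--Inose surface) is not carried out and is not how one can cleanly get it. The paper closes this step differently and concretely: by \cite[3.1, 3.2]{CD} exactly one of the three quotients, $S_3$, has a K3 minimal model $M$, and $S_3$ has $10$ nodes whose preimages in $M$ are $10$ disjoint nodal curves; the involutions $\bar\sigma_1=\bar\sigma_2$ induce an involution $\sigma$ on $M$ with $M^\sigma=\bigcup F_i$ and with $S_3/\bar\sigma_1$ equal to the (rational) quadric, so $\sigma$ acts as $-1$ on $H^{2,0}(M)$; Zhang's theorem \cite[3.1]{Zh} then forces $\rho(M)=20$, and Pedrini's theorem \cite{Ped} gives finite dimensionality of $h(M)$, hence of $t_2(\tilde S)=t_2(M)$. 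Without some such argument your proof does not go through, since finite dimensionality of a general K3 motive is unknown.

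Two further points. First, the structural claims in your second paragraph are asserted rather than established: that the image of $\Phi_2$ is a \emph{smooth} quadric and that $\tilde S\to Q$ is a bidouble (rather than, say, cyclic or non-Galois) cover is exactly the content of \cite[\S3]{CD}, which also uses the torsion-freeness of $\Pic(\tilde S)$; the paper quotes this, while your ``$\tau$-equivariance'' heuristic does not prove it. Second, your intermediate assertion that each $K_{W_j}$ is numerically trivial (so each $W_j$ is Enriques or K3) is false: by \cite[3.1]{CD} two of the three quotients are rational surfaces. This particular slip is harmless for your strategy, since rational quotients give $t_2(W_j)=0$ anyway (and in fact the paper uses exactly this to conclude $\sigma_1,\sigma_2$ act as $-1$ and hence $\sigma_3$ as $+1$ on $t_2(\tilde S)$, so $t_2(\tilde S)=t_2(M)$), but the deduction you offer for it is not valid. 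Your reduction in the first paragraph ($t_2(\tilde S)$ finite dimensional $\Rightarrow$ $h(S)$ finite dimensional $\Rightarrow$ Bloch via Lemma \ref{t2=0.BC}) and your isotypic-decomposition bookkeeping are fine and agree in substance with the paper.
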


\begin{proof} 
By \cite[\S3]{CD}, $\Pic(\tilde S)$ is torsion free and the image of the
bicanonical map $\Phi:\tilde S\to\P^3$ is a smooth quadric. By
Lemma \ref{lem:pg=1} below, $\tilde S$ has a finite dimensional
motive; this implies that $h(S)$ is also finite dimensional. As $p_g(S)=0$, 
Lemma \ref{t2=0.BC} implies that $S$ satisfies Bloch's conjecture.
\end{proof}

Lemma \ref{lem:pg=1} is based upon the Catanese--Debarre paper \cite{CD}.

\begin{lem}\label{lem:pg=1} 
Let $S$ be a surface of general type with $p_g(S)=1$, $q(S)=0$ and
$K^2_S=2$. Suppose that the bicanonical map $\Phi:S\to\P^3$ has
degree 4 and is a morphism onto a smooth quadric $\Sigma$. 
Then the motive of $S$ is finite dimensional.
\end{lem}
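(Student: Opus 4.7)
The plan is to exploit the $(\Z/2)^2$-Galois structure of the degree-$4$ bicanonical map, as analyzed in \cite{CD}, to decompose $h(S)$ in $\Mrat$ and reduce the finite dimensionality question to that of three easily identified quotient surfaces.

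First, I would invoke \cite{CD} to present $\Phi\colon S\to\Sigma$ (after the blow-up resolving its isolated fixed points as in diagram \eqref{square:S-sigma}) as a bidouble cover with Galois group $G\cong(\Z/2)^2$. Let $\sigma_1,\sigma_2,\sigma_3$ denote the three non-trivial involutions of $G$ and let $Y_i$ be the desingularization of $S/\sigma_i$. Because $|G|=4$ is invertible in $\Q$, the idempotents attached to the characters of $G$ give an orthogonal decomposition
\[
h(S)=h(\Sigma)\oplus M_1\oplus M_2\oplus M_3,\qquad h(Y_i)=h(\Sigma)\oplus M_i,
\]
where $M_i$ is the isotypic summand for the non-trivial character $\chi_i$ with kernel $\langle\sigma_i\rangle$ (cf.\ Example \ref{ex:Y=Xsigma}). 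Since $\Sigma\cong\P^1\times\P^1$ is rational, $h(\Sigma)$ is finite dimensional, so it suffices to prove finite dimensionality of each $h(Y_i)$.

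Next I would identify the three quotients via the two pencils on $S$ pulled back from the rulings of $\Sigma$. These base-point-free pencils $|F_1|$ and $|F_2|$ satisfy $F_i^2=0$ and $K_S\cdot F_i=2$, so by adjunction their generic members are smooth genus-$2$ curves on which $G$ acts faithfully with quotient $\P^1$. On any such fiber Accola's formula yields $g(F/\sigma_1)+g(F/\sigma_2)+g(F/\sigma_3)=2$, so exactly one of the three involutions acts as the (unique) hyperelliptic involution and the other two act bielliptically. After relabelling, $\sigma_1$ is hyperelliptic along $|F_1|$ and $\sigma_2$ along $|F_2|$; hence $|F_1|$ descends to a $\P^1$-fibration on $Y_1$, forcing $Y_1$ to be a ruled surface over $\P^1$, and since $q(Y_1)=0$ it is rational. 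Symmetrically $Y_2$ is rational, so both $h(Y_1)$ and $h(Y_2)$ are finite dimensional.

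The remaining quotient $Y_3$ inherits two elliptic fibrations over $\P^1$, since $\sigma_3$ acts bielliptically on the generic fibers of both pencils; the Catanese--Debarre analysis in \cite{CD} places its minimal model among the rational, Enriques or properly elliptic surfaces (Kodaira dimension $\le 1$), and in each such case $h(Y_3)$ is finite dimensional by \cite{GP}. Combined with the finite dimensionality of $h(\Sigma)$, the decomposition above then forces $h(S)$ to be finite dimensional. The main obstacle is this last step: verifying from the branch data of the bidouble cover that $Y_3$ does fall into one of the classes of surfaces whose motive is known to be finite dimensional, and in particular ruling out a generic K3 minimal model (for which the finite dimensionality conjecture remains open).
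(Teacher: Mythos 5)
Your decomposition via the $(\Z/2)^2$-cover and the treatment of the two rational quotients run parallel to the paper's argument (the paper cites \cite[3.1, 3.2]{CD} for the bidouble structure and for the rationality of $S/\sigma_1$ and $S/\sigma_2$, then concludes that $\sigma_1,\sigma_2$ act as $-1$ on $t_2(S)$, so $\sigma_3$ acts as $+1$ and $t_2(S)=t_2(S/\sigma_3)$). But your handling of the third quotient is exactly where the proof breaks down, and you have acknowledged as much: by \cite[3.1]{CD} the minimal model $M$ of $S/\sigma_3$ \emph{is} a K3 surface (with $S/\sigma_3$ having $10$ nodes whose preimages in $M$ are ten disjoint nodal curves), so there is nothing to "rule out." Your claim that the Catanese--Debarre analysis places the minimal model among rational, Enriques or properly elliptic surfaces is false; having elliptic pencils coming from the bielliptic action on the fibers is perfectly compatible with $M$ being K3, and for a general K3 finite dimensionality of the motive is open, so \cite{GP} does not apply. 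Since $p_g(S)=1$, one cannot expect $t_2(S)=0$ here; the whole content of the lemma is to control this nonzero transcendental part, i.e.\ $t_2(M)$.

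The missing idea is the paper's key step: the residual involutions $\bar\sigma_1=\bar\sigma_2$ on $S/\sigma_3$ induce an involution $\sigma$ on $M$ with quotient (birationally) $\Sigma$, which is rational, so $H^{2,0}(M)^\sigma=0$ and $\sigma$ is non-symplectic; its fixed locus is the union of the ten nodal curves. By Zhang's classification of K3 quotients by involutions \cite[3.1]{Zh} this forces $\rho(M)=20$, and by Pedrini's theorem \cite{Ped} a K3 surface with Picard number $20$ has finite dimensional motive. That gives finite dimensionality of $t_2(M)=t_2(S)$, and hence of $h(S)$. Without an input of this kind (some special-K3 finite dimensionality criterion applied to $M$), your scheme cannot close, so as it stands the proposal has a genuine gap at its decisive step.
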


\begin{proof} 
As shown in \cite[3.2]{CD}, the bicanonical map 
$S\to\Sigma\subset\P^3$ is a bidouble cover with Galois group
$G=\{1,\sigma_1,\sigma_2,\sigma_3\}$. Set $S_i=S/\sigma_i$, with
$i=1,2,3$. By \cite[3.1]{CD}, both $S_1$ and $S_2$ are rational
surfaces, while the minimal smooth surface $M$ over $S_3$ is a K3
surface. Moreover $S_3$ has 10 nodal points, and their inverse images
in $M$ are 10 disjoint smooth rational curves $F_1,\cdots,F_{10}$.

Since $t_2(S)^{\sigma_i}= t_2(S_i)$ and $t_2(S_1)=t_2(S_2)=0$, it
follows that $\sigma_1$ and $\sigma_2$ act as $-1$ to $t_2(S)$.
Hence $\sigma_3=\sigma_1\circ\sigma_2$ acts as $+1$. 
It follows that $t_2(S)=t_2(S_3)=t_2(M)$.

On $S_3$, the involutions $\bar\sigma_1$ and $\bar\sigma_2$ agree
and induce an involution $\sigma$ on $M$. Since $S_3/\bar\sigma_1=\Sigma$,
$M^{\sigma}$ is the union of the ten curves $F_i$.
Because $\Sigma$ is a rational surface, 
$H^{2,0}(M)^{\sigma}=H^{2,0}(\Sigma)=0$, 
so $\sigma$ acts as -1 on $H^{2,0}(M)\cong\C$.
It follows from \cite[3.1]{Zh} that the Neron-Severi group of
$M$ has rank $\rho(M)=20$. By Theorems 2 and 3 of \cite{Ped}, 
this implies that the motive $h(M)$ is finite dimensional in $\sM_{rat}$.
In particular $t_2(M)=t_2(S)$ is finite dimensional. Since $h_0(S),
h_4(S)$ and $h^{alg}_2(S)$ are finite dimensional, it follows that
$h(S)$ is finite dimensional.  
\end{proof}

\section{Numerical Campedelli surfaces ($K^2=2$)}\label{sec:K2=2}

Complex surfaces of general type with $p_g=q=0$ and $K^2_S=2$ 
are called {\it numerical Campedelli surfaces}.

Now let $S$ be a numerical Campedelli surface with an involution
$\sigma$; these have been classified in \cite{CMP}. 
By \cite[3.32(i,iv)]{CCM}, $\sigma$ has either $k=4$ or $k=6$ isolated 
fixed points. By Theorem \ref{sigma-composed}, the bicanonical map 
$\Phi:S\to\P^2$ is composed with $\sigma$ iff $k=6$; in this case 
Corollary \ref{D^2 bound} yields $-6 \le D^2 \le 2$. 
(This result appeared in \cite[3.1(ii)]{CMP}).

Campedelli surfaces with fundamental group $(\Z/2)^3$ satisfy Bloch's
conjecture, by the results in \cite{IM}. In \cite[2.4]{Vois}, 
Voisin proves that Bloch's conjecture holds for a family of Campedelli
surfaces constructed from $5\times5$ symmetric matrices $M(a)$, with 
$a\in \P^{11}$, of linear forms on $\P^3$ satisfying certain conditions.

Numerical Campedelli surfaces which arise as a $(\Z/2)^3$-cover of
$\P^2$ branched along 7 lines are called {\it Classical Campedelli
surfaces}. They were constructed by Campedelli, and later by
Kulikov in \cite{Ku}.  All 7 of the nontrivial involutions of
classical Campedelli surfaces are composed with bicanonical map.

\begin{thm}
Classical Campedelli surfaces satisfy Bloch's conjecture.
\end{thm}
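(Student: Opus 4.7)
The plan is to apply Proposition \ref{old nice thm}(ii) to each of the seven nontrivial involutions $\sigma$ in the Galois group $G=(\Z/2)^3$, and then to exploit the fact that $G$ is an elementary abelian $2$-group, exactly in the spirit of Theorem \ref{3-involutions}.

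Fix any $\sigma\in G\setminus\{1\}$. The paragraph preceding the theorem records that every such $\sigma$ is composed with the bicanonical map $\Phi_2$; hence Theorem \ref{sigma-composed}(2) tells us that the minimal model of the resolution $Y_\sigma$ of $S/\sigma$ is either rational or an Enriques surface. In either case its Kodaira dimension is at most $0$, so the remark immediately following Proposition \ref{old nice thm} (citing \cite{GP}) gives $t_2(Y_\sigma)=0$. Proposition \ref{old nice thm}(ii) then yields $\bar\sigma=-\operatorname{id}$ in $\End_{\Mrat}(t_2(S))$ for every nontrivial $\sigma\in G$.

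Next, pick two distinct elements $\sigma,\tau\in G\setminus\{1\}$. Because $G$ is elementary abelian, the product $\sigma\tau$ is again a nontrivial involution in $G$, so the previous step applies to $\sigma\tau$ to give $\overline{\sigma\tau}=-\operatorname{id}$ on $t_2(S)$. On the other hand,
\[
\overline{\sigma\tau}=\bar\sigma\circ\bar\tau=(-\operatorname{id})\circ(-\operatorname{id})=+\operatorname{id}.
\]
Hence $2\,\operatorname{id}_{t_2(S)}=0$, and since $\Mrat$ is $\Q$-linear this forces $\operatorname{id}_{t_2(S)}=0$, i.e., $t_2(S)=0$. Because $p_g(S)=0$, Lemma \ref{t2=0.BC} then delivers Bloch's conjecture for $S$.

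The only substantive input is the claim, recorded in the paragraph introducing classical Campedelli surfaces, that all seven nontrivial involutions of $G$ are composed with $\Phi_2$; by Theorem \ref{sigma-composed}(1) this is equivalent to verifying that each such $\sigma$ has exactly $k=K_S^2+4=6$ isolated fixed points on $S$, and this is where the real concrete work sits. It can be done directly from the description of $S\to\P^2$ as the $(\Z/2)^3$-cover branched along $7$ lines in general position, or by appealing to the classification of involutions on numerical Campedelli surfaces in \cite{CMP}. Once that fact is granted, the motivic argument above is purely formal.
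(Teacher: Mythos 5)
Your proof is correct and is essentially the paper's argument: the paper likewise takes from \cite[5.1]{CMP} that every nontrivial involution is composed with $\Phi_2$ with quotient rational or birationally Enriques, so $t_2$ of each quotient vanishes, and then concludes via Theorem \ref{3-involutions}, whose proof is exactly your $(-1)\cdot(-1)=+1$ computation on $t_2(S)$. Your only variation is to unpack that theorem through Proposition \ref{old nice thm}(ii) rather than cite it, which changes nothing of substance.
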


\begin{proof} 
The automorphism group of $S$ coincides with $(\Z/2)^3$; see
\cite[Thm.\,4.2]{Ku}. By \cite[5.1]{CMP}, the bicanonical map
is composed with every nontrivial involution $\sigma$ of $S$ and 
each quotient $S/\sigma$ is either rational or else it is 
birationally equivalent to an Enriques surface. 
By Theorem \ref{3-involutions} 
we get $t_2(S)=0$, and hence $S$ satisfies Bloch's conjecture.
\end{proof}

Here is one such family of classical Campedelli surfaces. 
We fix 7 distinct lines $L_i$ in $\P^2$ such that at most 3
pass through the same point, and enumerate the nontrivial elements of
$G=(\Z/2)^3$ as $g_1,\dots,g_7$, so that
$g_1,g_2,g_3$ generate $G$ and if $g_i+g_j+g_k=0$ then
$L_i$, $L_j$ and $L_k$ do not pass through the same point.
Fix characters $\chi_1,\chi_2,\chi_3$ generating $G^*$.
By \cite{Pa91}, the equations 
$2\mathcal{L}_i=\sum_{j=1}^7\epsilon_{ij}L_j$ 
($i=1,2,3$) determine a normal $G$-cover $V$ of $\P^2$ as long as
$\epsilon_{ij}$ is $1$ if $\chi_i(g_j)=-1$ and zero otherwise.
(See Example 1 of \cite[\S5]{CMP}, or \cite[4.1]{Ku}.) 
The surface $S$ is obtained by resolving the singular points of $V$, 
which only lie over the triple intersection points of the lines
$L_i$ in the plane.

\section{The case $K_S^2=3,4$}\label{sec:K2=3,4}

Let $S$ be a minimal surface of general type with $p_g(S)=0$
and $K^2_S=3$. For each nontrivial involution $\sigma$ of $S$,
Corollary \ref{S/s if K2=3} and Lemma \ref{t2=0.BC} 
imply that $t_2(S/\sigma)=0$.  Therefore if $S$ is birational to
a bidouble cover, then $S$ satisfies Bloch's conjecture
by Theorem \ref{3-involutions}. 

\begin{ex}[$\mathbf{K^2_S=3}$] 
Rito gives an example in \cite[5.2]{Ri1} in which $S$ is the 
minimal model of a bidouble cover of $\P^2$ and $K^2_S=3$.
Write $\sigma_1,\sigma_2,\sigma_3$ for the 3 involutions of $S$ 
corresponding to the bidouble cover. Rito shows that
the bicanonical map is not composed with $\sigma_1$ or $\sigma_2$ but is
composed with $\sigma_3$.  If $W_i$ is a minimal model of the
desingularization of $S/\sigma_i$, he shows that $W_1$ is an
Enriques surface, $W_2$ has Kodaira dimension 1 and $W_3$ is a
rational surface.  Therefore $t_2(W_i)=0 $ for $i=1,2,3$. 
As remarked above, this implies that
%By Theorem \ref{3-involutions} we have $t_2(S)=0$, and hence 
$S$ satisfies Bloch's conjecture. 

Similar examples with $4 \le K^2_S \le 7$ have
been constructed in \cite{Ri2}.
\end{ex}

\medskip 
Y.\,Neum and D.\,Naie \cite{Na} constructed a family of surfaces of 
general type with $K^2=4$ as double covers of an Enriques surface with eight
nodes.  I.\,Bauer and F.\,Catanese proved in \cite{BC2}
that the connected component of the moduli space corresponding to
this family is irreducible, normal, unirational of dimension 6. In
Remark 3.3 of {\it op.\ cit.} they noticed that the proof, given in an
unpublished manuscript by Keum, of Bloch's conjecture for a subfamily
of dimension 4 of the connected component of the moduli space of all
Keum-Naie surfaces, can be extended to the whole family.

Using the construction of Keum-Naie surfaces given in \cite{BC2},
we will now give a motivic proof of Bloch's conjecture for these surfaces.

\begin{thm} 
Let $S$ be a Keum-Naie surface with $K^2_S=4$ and $p_g(S)=0$. 
Then $S$ satisfies Bloch's conjecture.
\end{thm}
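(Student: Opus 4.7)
The plan is to realize $S$ as (the smooth minimal model of) a bidouble cover and then invoke Theorem~\ref{3-involutions}. Following the Bauer--Catanese description in \cite{BC2}, a Keum--Naie surface is obtained from a free action of a group containing a $(\Z/2)^2$-subgroup on a $(\Z/2)^2$-cover built from a product of two elliptic curves; equivalently, $S$ carries a natural Galois group $G \cong \Z/2\times\Z/2$ with three nontrivial involutions $\sigma_1,\sigma_2,\sigma_3$ fitting the framework of Definition~\ref{def:bidouble}. My goal will be to check that each desingularization $Y_i$ of $S/\sigma_i$ has $t_2(Y_i)=0$; then Theorem~\ref{3-involutions} forces $t_2(S)=0$, and Lemma~\ref{t2=0.BC} immediately gives Bloch's conjecture.

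First I would make the bidouble structure explicit. The Bauer--Catanese presentation displays $S$ over an 8-nodal Enriques surface $Z$ as a double cover, so the covering involution $\sigma_1$ of $S\to Z$ is one of the three $\sigma_i$; its quotient $Y_1$ has minimal model an Enriques surface, and hence $t_2(Y_1)=0$ by the remark following Proposition~\ref{old nice thm} (citing \cite{GP}). For the other two involutions $\sigma_2, \sigma_3 = \sigma_1\sigma_2$ one expects the minimal model $W_i$ of $Y_i$ to be either rational, an Enriques surface, or a properly elliptic surface (Kodaira dimension at most $1$); in every such case the same remark yields $t_2(Y_i)=0$.

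The real work, and the main obstacle, is the identification of the minimal models $W_2$ and $W_3$, since they are not part of the defining data of $S$. I would compute, for each $\sigma_i$, the number $k$ of isolated fixed points, the divisorial part $D$ of the fixed locus, and the self-intersection $D^2$, using the explicit $G$-action from \cite[\S2]{BC2}. Lemma~\ref{2L2=D2-k} then gives $L^2$, and the inequality chain of Corollary~\ref{D^2 bound} together with Remark~\ref{rem:rhoY} constrains $K_{Y_i}^2$ and $\rho(Y_i)$. These invariants, combined with whether the bicanonical map of $S$ is composed with $\sigma_i$ (Theorem~\ref{sigma-composed}) or not (Theorem~\ref{not-composed}), should pin down $\kappa(W_i)\le 1$ for $i=2,3$. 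Once the three vanishings $t_2(Y_i)=0$ are in hand, Theorem~\ref{3-involutions} and Lemma~\ref{t2=0.BC} finish the proof.
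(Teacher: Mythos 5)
Your overall strategy (exhibit a $(\Z/2)^2$-action on $S$ by involutions, show $t_2$ of each quotient vanishes, conclude by Theorem \ref{3-involutions} and Lemma \ref{t2=0.BC}) is indeed the paper's strategy, and your identification of one involution as the covering involution of $S$ over the $8$-nodal Enriques surface is correct. But your proposal has a genuine gap at exactly the point you yourself flag as ``the real work'': you never establish $t_2(Y_2)=t_2(Y_3)=0$ for the other two involutions. You only outline a plan (compute $k$, $D$, $D^2$, then hope the numerics plus Theorem \ref{not-composed} force $\kappa(W_i)\le 1$), and you state the desired conclusion with ``one expects'' and ``should pin down''. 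Since the whole theorem reduces to precisely these two vanishings, the proof is not complete as written; moreover it is not clear that the fixed-point data alone, fed into Theorem \ref{not-composed}, rules out Kodaira dimension $2$ without further case-by-case geometric input.

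What you are missing is the structural fact that makes the argument immediate and that the paper uses: for a Keum--Naie surface the bicanonical map $\Phi_2\colon S\to\P^4$ has degree $4$, and (via the Bauer--Catanese construction) $\Phi_2$ factors as the $H$-quotient map $S\to\Sigma=S/H$, where $H\cong\Z/2\times\Z/2$ and $\Sigma$ is a degree-$4$ Del Pezzo surface in $\P^4$ (the quotient of the Enriques surface $Y=(E_1\times E_2)/G$ by the extra involution $\gamma_3$). Hence \emph{all three} nontrivial involutions $\sigma_1,\sigma_2,\sigma_3$ of $H$ are composed with the bicanonical map, and Theorem \ref{sigma-composed}(2) directly yields that each $S/\sigma_i$ is rational or birational to an Enriques surface, so $t_2(S/\sigma_i)=0$ for $i=1,2,3$ with no fixed-point counting and no appeal to Theorem \ref{not-composed}. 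To repair your argument you should either verify this factorization of $\Phi_2$ through the bidouble cover $S\to\Sigma$, or else actually carry out (not merely propose) the computation of the invariants of $\sigma_2,\sigma_3$ and the exclusion of Kodaira dimension $2$ for $W_2,W_3$.
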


\begin{proof} 
Let $E_1=\C/\Lambda_1$ and $E_2=\C/\Lambda_2$ with 
$\Lambda_i=\Z e_i \oplus \Z e'_i$ be two elliptic curves and 
let $\Gamma$ be the group of affine transformations generated by 
$\gamma_1$, $\gamma_2$ and by the translations $(e_1,e'_1,e_2, e'_2)$, 
where $\gamma_1$ and $\gamma_2$ are given by
$$
\gamma_1(z_1,z_2)= (z_1+e_1/2,-z_2) \ ; \ 
\gamma_2(z_1,z_2)= (-z_1,z_2 +e_2/2).
$$
Then $\gamma^2_i=e_i$, for $i=1,2$ and 
$\Gamma=\langle\gamma_1,e'_1,\gamma_2,e'_2\rangle$. 
Let $G=\Z/2\times\Z/2$; then we have an exact sequence of groups, 
see \cite[3.1]{BC2}
$$
1 \to \Z^4 \simeq\langle e_1,e'_1,e_2,e'_2\rangle \to \Gamma \to G \to 1.
$$
Here $\gamma_1 \mapsto(1,0)=g_1$, $\gamma_2 \mapsto (0,1)=g_2$ and 
$(\gamma_1 \circ \gamma_2 \mapsto (1,1)= g_1 \circ g_2$.

The action of $\Gamma$ on $E_1\times E_2$ induces a faithful action of the 
group $G$ on $E_1\times E_2$. $\gamma_1, \gamma_2$ have 
no fixed points on $E_1 \times E_2$, while the involution 
$\gamma_1 \circ \gamma_2$ has 16 fixed points. The quotient 
$Y=E_1\times E_2/G$ is an Enriques surface having 8 nodes. 
In \cite[1.4]{BC2} they lift the $G$-action on $E_1\times E_2$ 
to a double cover $\tilde X \map{\pi} E_1\times E_2$, branched in a 
$G$-invariant divisor, in such a way that $G$ acts freely on $\tilde X$.
Let $f:\tilde S \to \tilde X$ be a minimal resolution of singularities 
of $\tilde X$ and 
set $S=\tilde S/G$. Then $S$ is a Keum-Naie surface with $K^2_S=4$ 
and $p_g(S)=q(S)=0$. 
The bicanonical map $\Phi_2:S\to\P^4$ of $S$ has degree 4 and
the composition of $h:\tilde S \to S$ with $\Phi_2 $ factors through 
the product $E_1 \times E_2$. Moreover these maps are invariant
under the action of the group $G$ as well as under the action of the 
automorphism
\[
\gamma_3(z_1,z_2)= (-z_1 +e_1/2,z_2).
\]
Therefore the composition $\Phi_2\circ h$ factors through the 
$\Z/2$-quotient $\Sigma$ of the Enriques surface $Y$ by the action 
of the involution $\gamma_3$. $\Sigma \subset \P^4$ is a Del Pezzo surface
 of degree 4. In summary we have a commutative diagram
$$\CD
 \tilde S@> {h}>> S=\tilde S/G@>{\Phi_2}>> \Sigma   \\
 @V{\pi \circ f}VV  @V{p}VV  @V{=}VV \\
E_1 \times E_2 @>{g}>> Y=(E_1 \times E_2)/G@>{s}>> \Sigma \\
\endCD$$
where the map $h$ is an \'etale $G$-covering, 
the map $\pi\circ f$ of degree 2 is the Albanese map of $\tilde S$,
and where $p$ and $s$ have degree 2. 
Therefore the Keum-Naie surface $S$ is a 
$H$-covering of the rational surface $\Sigma$, with $H\simeq\Z/2\times\Z/2$
and $\tilde S$ is an $H\times G$-covering of $\Sigma$. Let us enumerate:
$$
H=\{1,\sigma_1, \sigma_2, \sigma_3\}.
$$ 
The 3 involutions $\sigma_i$ are all composed with $\Phi_2$.
Hence $t_2(S/\sigma_i)=0$ for $i=1,2,3$ by Theorem \ref{sigma-composed}.
From Theorem \ref{3-involutions} we get $t_2(S)=0$.
\end{proof}
 
\section{Fibrations on surfaces}\label{sec:fibrations}

A surface $S$ is said to be a {\it product-quotient surface} 
if it is birational to a quotient $(C_1\times C_2)/G$ of the product 
$C_1 \times C_2$ of two curves of genera $\ge2$ %$g(C_1),g(C_2)\ge 2$,
by the action of a finite group $G$. Since the motives $h(C_i)$ are 
finite dimensional, so are $h(C_1\times C_2)$ and $h(S)=h(C_1\times C_2)^G$. 
Since $t_2$ is a birational invariant, $t_2(S)$ is also finite dimensional.
If $p_g(S)=0$, this implies that $S$ satisfies Bloch's conjecture
(by Lemma \ref{t2=0.BC}).

A complete classification is given in \cite{BCGP} of surfaces $S$ with
$p_g(S)=q(S)=0$, whose canonical models arise as product-quotient
surfaces. If $G$ acts freely then the quotient surface is minimal of
general type and it is said to be {\it isogenous to a product}. If $S$
is isogenous to a product then $K^2_S=8$, 
see \cite[0.1]{BCG} or \cite[Thm.\,4.3]{BCP}.

In the case when $G$ acts freely on both $C_1$ and $C_2$, then the
projection $ C_1 \times_G C_2 \to C_2/G$ has fibers $C_1$. More
generally, a fibration $S \to B$ from a a smooth projective surface
onto a smooth curve is said to be {\it isotrivial} if the smooth
fibers are mutually isomorphic.

\begin{thm}\label{thm:isotriv=fd}
Let $S$ be a complex surface of general type with $p_g=0$ which
has a fibration $S\map{\pi}B$ with $B$ a smooth curve of genus $b$, 
and general fibre a curve $F$ of genus $g\ge1$. 
If $\pi$ is isotrivial, then $t_2(S)=0$, and $S$ satisfies Bloch's conjecture.
\end{thm}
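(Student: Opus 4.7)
The plan is to reduce this theorem to the argument for product-quotient surfaces given at the opening of Section~\ref{sec:fibrations}, by invoking the classical structure theorem for isotrivial fibrations. The key inputs are: (i) the motive of any smooth projective curve is finite-dimensional (Kimura), (ii) direct summands, tensor products, and Tate twists preserve finite-dimensionality, and (iii) the birational invariance of $t_2$ on smooth projective surfaces, together with Lemma~\ref{t2=0.BC}.

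First I would recall the structure theorem. Because $\pi$ is isotrivial with general fiber $F$, there exist a smooth projective curve $\tilde B$, a faithful action of a finite group $G\subset\operatorname{Aut}(F)$ on $\tilde B$ with $B=\tilde B/G$, and a birational equivalence of $S$ with the diagonal quotient $(F\times\tilde B)/G$. This is classical and requires no assumption on the genera beyond $g\ge 1$; in particular the case $g=1$ (elliptic isotrivial fibrations) and the case $\tilde B\cong\P^1$ are both allowed. Let $X$ denote a smooth projective model of $(F\times\tilde B)/G$ obtained by resolving the (cyclic quotient) singularities.

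Next I would argue that $h(X)$, and hence $h(S)$, is finite-dimensional in $\Mrat$. The motives $h(F)$ and $h(\tilde B)$ are finite-dimensional by Kimura, so $h(F\times\tilde B)=h(F)\otimes h(\tilde B)$ is finite-dimensional as well. The diagonal $G$-action gives an idempotent
\[
p=\tfrac{1}{|G|}\sum_{g\in G}\Gamma_g \in \operatorname{End}(h(F\times\tilde B)),
\]
generalizing the construction in Lemma~\ref{Y=pX} and Example~\ref{ex:Y=Xsigma}, and the summand it cuts out coincides with the motive of $(F\times\tilde B)/G$ away from the singular locus. Resolving the quotient singularities contributes only Tate summands, so $h(X)$ is a direct sum of $p\cdot h(F\times\tilde B)$ and finitely many Tate motives; in particular $h(X)$ is finite-dimensional. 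Since $S$ and $X$ are birationally equivalent smooth projective surfaces, $h(S)$ differs from $h(X)$ only by Tate twists arising from blow-ups, and is therefore itself finite-dimensional.

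Finally, since $p_g(S)=0$, the equivalence (a)$\Leftrightarrow$(b)$\Leftrightarrow$(c) of Lemma~\ref{t2=0.BC} yields $t_2(S)=0$ and Bloch's conjecture for $S$. The main obstacle is the first step: one must actually invoke the structure theorem, which is not formal, to pass from the mere isotriviality of $\pi$ to a global birational description of $S$ as a diagonal quotient of a product of curves. Once that birational model is in hand, the motivic argument proceeds along the same lines as the product-quotient case recalled at the beginning of this section.
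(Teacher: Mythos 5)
Your argument is essentially the paper's: the paper's proof consists precisely of invoking Serrano's structure theorem \cite[2.0.1]{Se} to write $S$ birationally as $F\times_G C$, and then (implicitly, via the product-quotient discussion opening Section~\ref{sec:fibrations}) using finite-dimensionality of curve motives, the averaging projector, birational invariance of $t_2$, and Lemma~\ref{t2=0.BC}. You have simply spelled out the motivic details that the paper leaves to the reader, so the proposal is correct and follows the same route.
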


\begin{proof} 
By \cite[2.0.1]{Se} there is a finite group $G$ acting on the fiber $F$
and a Galois cover $C\to B$ so that $B=C/G$ and
$S$ is birational to $S'=F\times_G C$.
\end{proof}

One source of isotrivial fibrations comes from the following
observation of Beauville. Let $S$ be a smooth projective complex
surface and $S\to B$ a fibration with general fibre a smooth curve $F$
of genus $g\ge1$.  Let $b$ denote the genus of the curve $B$.
Beauville proved in \cite{Be} that
\begin{equation}\label{eq:Beauville}
K^2_S \ge 8(b-1)(g-1),
\end{equation}
and if equality holds then the fibration is isotrivial.
%\edit{redundant result on\\$K^2=8(b-1)(g-1)$\\moved out 2/18/13}

Let $S$ be a smooth complex surface with a fibration $f:S\to\P^1$ with
general fiber $F$ a smooth curve of genus $g(F)$. 
In many cases, we can contruct a finite map $h:C\to\P^1$
with $C$ smooth such that the normalization $X$ of $C\times_{\P^1}S$
is nonsingular and the map $\tilde h:X\to S$ is \'etale
Because $\tilde h$ is \'etale, $X$ is smooth and $K_X=\tilde h^*K_S$, 
we have $K^2_X=\deg(\tilde h) K^2_S$. This information is summarized
in the commutative diagram:
\begin{equation}\label{square:XSC}
\CD
  X @>{\tilde h}>>S \\
 @VV{f_C}V  @VV{f}V  \\
  C @>{h}>> \P^1.
\endCD
\end{equation}

\begin{lem}\label{4.3} 
Suppose that the singular fibers of a fibration $S\to\P^1$ consist only in 
double fibers, over points $P_1,\dots,P_r$ of $\P^1$, and that there is
a smooth cover $h:C\to\P^1$ branched along the $P_k$,
with $h^{-1}(P_k)$ consisting only in double points.
Then, in the diagram \eqref{square:XSC}, $X$ is nonsingular and 
fibered over $C$, and $\tilde h$ is an \'etale map.
\end{lem}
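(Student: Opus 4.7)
The plan is to verify both claims in local analytic (or étale) coordinates, splitting $\P^1$ into the open complement of $\{P_1,\ldots,P_r\}$ and small neighborhoods of each branch point $P_k$.

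Over $\P^1\setminus\{P_1,\ldots,P_r\}$ the map $h$ is étale, so the base change $C\times_{\P^1}S\to S$ is étale and $C\times_{\P^1}S$ is smooth there; in particular it equals its own normalization, so nothing needs checking away from the fibers over the $P_k$.

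Next I would fix a point $(q,p)$ with $h(q)=f(p)=P_k$. Since the double fiber $2F_k$ has smooth reduced part $F_k$, I can pick local analytic coordinates $(u,z)$ at $p\in S$ with $F_k=\{u=0\}$; if $t$ is a local parameter on $\P^1$ at $P_k$, then $f^*(t)=\beta u^2$ for a unit $\beta$. Similarly, since $C$ is smooth and $h$ has ramification index $2$ at $q$, there is a local parameter $w$ at $q$ with $h^*(t)=\alpha w^2$ for a unit $\alpha$. The scheme $C\times_{\P^1}S$ is then locally cut out by $\alpha w^2-\beta u^2=0$, i.e.\ by $w^2=\gamma u^2$ with $\gamma=\beta/\alpha$ a unit. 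Choosing an analytic square root $\sqrt\gamma$, the equation splits as $(w-u\sqrt\gamma)(w+u\sqrt\gamma)=0$, so $C\times_{\P^1}S$ is analytically the union of two smooth branches meeting transversally along $\{u=w=0\}$. The normalization $X$ separates these branches into two disjoint smooth sheets; on each sheet $(w,z)$ is a local coordinate system, and $\tilde h$ is given by $(u,z)=(\pm w/\sqrt\gamma,\,z)$, whose Jacobian is the unit $\pm 1/\sqrt\gamma$. Hence $X$ is smooth at $(q,p)$ and $\tilde h$ is étale there.

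The fibration $X\to C$ is then the composite of the normalization with the first projection $C\times_{\P^1}S\to C$, and commutativity of \eqref{square:XSC} is built into the fiber product. The one delicate point is the local analytic factorization of $w^2-\gamma u^2$, which needs a square root of the unit $\gamma$; this is harmless in the analytic topology over $\C$ — the only setting in which smoothness and étaleness are tested — so it presents no real obstacle. The implicit hypothesis that the reduced part of each double fiber is smooth is what makes the local normal forms $f^*(t)=\beta u^2$ available; without it one would meet genuinely singular points on $C\times_{\P^1}S$ that need not be resolved by passing to the normalization.
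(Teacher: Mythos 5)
Your proposal is correct and is essentially the paper's own (two-line) proof written out in local analytic coordinates: the paper simply observes that the base change $C\times_{\P^1}S\to S$ is \'etale away from the double fibers and is a simple normal crossings divisor over them, so that the normalization $X$ is \'etale over $S$. (Your closing caveat about smoothness of the reduced fiber is not actually needed: even at a singular point of the reduced fiber one has $f^*(t)=\beta g^2$ for a local equation $g$ of the reduced fiber, and the same factorization $(w\sqrt{\alpha}-g\sqrt{\beta})(w\sqrt{\alpha}+g\sqrt{\beta})$ yields two smooth branches, each projecting isomorphically to $S$, which the normalization separates.)
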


\begin{proof} 
The base change, $C\times_{\P^1}S\to S$ is \'etale except over
the double fibers, where it is a simple normal crossings divisor.
Therefore the normalization $X$ of $C\times_{\P^1}S$ is \'etale over $S$.
\end{proof}

\begin{ex}\label{doublecover} 
Suppose that $f:S\to\P^1$ has an even number of singular fibers,
all double fibers. Let $C$ be the double cover of $\P^1$ branched at
the points of $\P^1$ supporting the double fibers of $f$. Then the
hypotheses of Lemma \ref{4.3} are satisfied, so we have a diagram 
\eqref{square:XSC} with $\tilde h$ \'etale.
\end{ex} 

\paragraph{\bf Pardini's method}
We recall Pardini's method (from \cite[(2.21)]{Pa91}) for producing a 
smooth cover $C\to\P^1$, branched along a given set of 
$r$ points $P_k$, together with a faithful action of the group
$G=(\Z/2)^s$ on $C$ so that $C/G=\P^1$.
Fix linearly independent 1-dimensional characters $\chi_1,\dots,\chi_s$ 
of $G=(\Z/2)^s$ and enumerate the $2^s-1$ cyclic subgroups
$H_j$ of $G$ in some order. Define integers $\epsilon_{ij}$ to be 
1 if the character $\chi_i$ is nontrivial on $H_j$ and 0 otherwise.

Suppose that we can partition the $P_k$ into $2^s-1$ subsets $D_j$
of cardinality $n_j$, and find integers $L_1,\dots,L_s$
such that $2L_i=\sum\epsilon_{ij}n_j$ for all $i$. 
Regarding each $D_j$ as an effective divisor of degree $n_j$ and
the $L_i$ as the degrees of line bundles, 
this yields a family of ``reduced building data'' in the sense of
\cite[Prop.\,2.1]{Pa91}. Pardini constructs a $G$-cover of $\P^1$
from this data in {\it loc.\,cit.}
Since the fiber over each $P_k$ has $2^{s-1}$ double points,
the ramification divisor on $C$ has degree $2^{s-1}r$ and
 $C$ has genus $2^{s-2}r+1-2^s$, by Hurwicz' theorem.
The case $s=1$ recovers the classical result that any even number
of points in $\P^1$ forms the branch locus of a double cover.
Here is the case $s=2$:

\begin{lem}\label{Pardini's cover}
Let $P_1,\cdots P_r$ be distinct points on $\P^1$, $r\ge3$. 
Then there exists a smooth curve $C$ and a $(\Z/2)^2$-cover 
$h:C \to \P^1$ such that the ramification divisor on $C$ has 
degree $2r$ and $C$ has genus $r-3$.
\end{lem}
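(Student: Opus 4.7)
The plan is to apply Pardini's method, recalled just before the statement, with $s = 2$, so that $G = (\Z/2)^2$. This group has three nontrivial cyclic subgroups $H_1, H_2, H_3$ and two generating characters $\chi_1, \chi_2$, each trivial on exactly one of the $H_j$. After indexing so that $\chi_1$ is trivial on $H_2$ and $\chi_2$ is trivial on $H_1$, the matrix $(\epsilon_{ij})$ has $\epsilon_{11} = \epsilon_{13} = \epsilon_{22} = \epsilon_{23} = 1$ and $\epsilon_{12} = \epsilon_{21} = 0$, so the integrality conditions on the $L_i$ become $2L_1 = n_1 + n_3$ and $2L_2 = n_2 + n_3$.

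The first step is to partition $\{P_1, \dots, P_r\}$ into three disjoint subsets $D_1, D_2, D_3$ of cardinalities $n_1, n_2, n_3$ summing to $r$, arranged so that $n_1, n_2, n_3$ all have the same parity; this is exactly what forces both $n_1+n_3$ and $n_2+n_3$ to be even. When $r$ is odd, take $n_1 = n_2 = 1$ and $n_3 = r-2$; when $r$ is even, take $n_1 = n_2 = 2$ and $n_3 = r-4$. Both choices are non-negative since $r \geq 3$. Viewing each $D_j$ as a reduced effective divisor on $\P^1$ and each $L_i$ as a line bundle of the prescribed degree, Pardini's theorem produces a $G$-cover $h: C \to \P^1$ from this reduced building data.

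It then remains to verify smoothness and connectedness of $C$. Smoothness is automatic because the $D_j$ are pairwise disjoint reduced divisors on $\P^1$, which is the smoothness hypothesis in Pardini's construction. Connectedness amounts to checking that for each of the three nontrivial characters $\chi$ of $G$, the associated divisor $\sum_{j : \chi|_{H_j} \neq 1} D_j$ is nonzero; these divisors are $D_1 + D_3$, $D_2 + D_3$, and $D_1 + D_2$, all nonzero for our choices (including the extreme case $r = 4$, where $n_3 = 0$ but the other two partition sets are nonempty).

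Finally, Riemann--Hurwitz applied to $h: C \to \P^1$ yields the stated numerics: over each $P_k$ the fiber consists of $|G|/2 = 2$ points of ramification index $2$, contributing $2$ to the ramification divisor, so $\deg R = 2r$; then $2g_C - 2 = 4(-2) + 2r$ gives $g_C = r - 3$. I expect the only real subtleties to be the parity bookkeeping in the partition step and the connectedness check when $r = 4$; otherwise this is a direct specialization of Pardini's general method to the case $s = 2$.
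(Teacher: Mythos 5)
Your proposal is correct and follows essentially the same route as the paper: specialize Pardini's method to $s=2$, partition the $r$ points into three subsets whose cardinalities make the relevant sums even (your choice $n_1=n_2=1$, $n_3=r-2$ for $r$ odd versus the paper's $n_1=n_3=1$, $n_2=r-2$ is just a relabeling), and read off the degree of the ramification divisor and the genus. Your additional checks of smoothness, connectedness (including $r=4$, where one subset is empty), and the Riemann--Hurwitz computation are exactly the points the paper handles in the general discussion of Pardini's method preceding the lemma, so nothing is missing.
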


\begin{proof}
We use Pardini's method with $s=2$. We need to partition the points
into three subsets of cardinalities $n_i$, corresponding to the
three subgroups $H_1,H_2,H_3$ of order~2 in $(\Z/2)^2$. 
As above, we need to solve the equations $n_1+n_2+n_3=r$, $n_2+n_3=2L_1$ 
and $n_1+n_3=2L_2$ for positive integers $n_i$ and $L_j$.
If $r$ is even and at least 4, we can take $n_1=n_2=2$ and $n_3=r-4$; 
if $r$ is odd and at least 3, we can take $n_1=n_3=1$ and $n_2=r-2$.
\end{proof}

\begin{thm}\label{K^2 bound} 
Let $S$ be a minimal surface of general type with $p_g(S)=0$. 
Suppose that there exists a fibration $f: S \to \P^1$ with 
general fiber $F$ and $r$ double fibers as singular fibers. Then
$$
K^2_S \ge 2(r-4) (g(F)-1).
$$
If equality holds then $S$ satisfies Bloch's conjecture.
\end{thm}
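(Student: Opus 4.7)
The plan is to leverage Beauville's inequality \eqref{eq:Beauville}, which has the right shape $K^2 \ge 8(b-1)(g-1)$, by replacing the base $\P^1$ (of genus $b=0$) with a curve $C$ of larger genus via an \'etale base change. The Pardini construction of Lemma \ref{Pardini's cover} provides exactly the right cover of $\P^1$: a smooth $(\Z/2)^2$-cover $h\colon C\to\P^1$ branched over the $r$ points $P_k$ where $f$ has double fibers, with each fiber of $h$ over a $P_k$ consisting of two double points. Then Lemma \ref{4.3} applies to the diagram \eqref{square:XSC}, so the normalization $X$ of $C\times_{\P^1}S$ is smooth and $\tilde h\colon X\to S$ is \'etale of degree~$4$, with an induced fibration $f_C\colon X\to C$ whose general fiber is still $F$.

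Before doing this, reduce to $r\ge 5$: if $r\le 4$ then $2(r-4)(g(F)-1)\le 0$, while $K_S^2>0$ since $S$ is of general type, so the inequality is automatic (and equality is impossible). For $r\ge 5$, Lemma \ref{Pardini's cover} yields a smooth $C$ of genus $b(C)=r-3\ge 2$. Since $\tilde h$ is \'etale, $K_X=\tilde h^*K_S$ and hence $K_X^2=4K_S^2$. Applying Beauville's inequality to $f_C\colon X\to C$:
\[
4K_S^2 \;=\; K_X^2 \;\ge\; 8\bigl(b(C)-1\bigr)\bigl(g(F)-1\bigr) \;=\; 8(r-4)\bigl(g(F)-1\bigr),
\]
and dividing by $4$ gives the bound $K_S^2\ge 2(r-4)(g(F)-1)$.

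Now suppose equality holds. Then equality holds in Beauville's inequality for $f_C\colon X\to C$, so $f_C$ is isotrivial. By the argument in the proof of Theorem \ref{thm:isotriv=fd} (invoking \cite[2.0.1]{Se}), $X$ is birational to a quotient $F\times_G C'$ of a product of curves, so $h(X)$ is finite dimensional in $\Mrat$. Since $\tilde h\colon X\to S$ is a finite morphism, Lemma \ref{Y=pX} exhibits $h(S)$ as a direct summand of $h(X)$, so $h(S)$ is finite dimensional as well; in particular $t_2(S)$ is finite dimensional. Because $p_g(S)=0$, Lemma \ref{t2=0.BC} then yields $t_2(S)=0$, i.e., $S$ satisfies Bloch's conjecture.

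The main obstacle is conceptual rather than computational: one must arrange a cover of the base so that Beauville's bound on the total space is sharp enough to imply the claimed bound on $S$, and so that the étale property is preserved (giving $K_X^2=\deg(\tilde h)\cdot K_S^2$ with no correction). Pardini's $(\Z/2)^2$-cover is the minimal choice making both the degree count and the branching hypothesis of Lemma \ref{4.3} work simultaneously, and the transfer of finite-dimensionality from $X$ back to $S$ in the equality case relies crucially on $\tilde h$ being finite (Lemma \ref{Y=pX}).
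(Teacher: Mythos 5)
Your proof is correct and follows essentially the same route as the paper: Pardini's $(\Z/2)^2$-cover of $\P^1$ branched at the $r$ points supporting the double fibers (Lemma \ref{Pardini's cover}), the \'etale base change of Lemma \ref{4.3} giving $K_X^2=4K_S^2$, Beauville's inequality \eqref{eq:Beauville} applied to $X\to C$ with $g(C)=r-3$, and isotriviality in the equality case. Your additions --- disposing of $r\le 4$ separately and transferring finite dimensionality from $h(X)$ to $h(S)$ via Lemma \ref{Y=pX} and Lemma \ref{t2=0.BC}, where the paper simply invokes Theorem \ref{thm:isotriv=fd} --- are minor refinements of the same argument, and in fact make the equality case slightly more watertight.
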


\begin{proof} 
By Lemma \ref{Pardini's cover} we can find a $(\Z/2)^s$-cover 
$h:C\to \P^1$ branched over the $r$ points of $\P^1$ corresponding to 
the double fibres of $f$. We have seen that the degree of the 
ramification divisor of $h$ on $C$ is $2^{s-1}r$, and that $C$ has
genus $2^{s-2}r+1-2^s$.
By Lemma \ref{4.3} we can find a square \eqref{square:XSC} with
$\tilde h:X\to S$ \'etale. By Beauville's bound \eqref{eq:Beauville},
\[
K^2_X \ge 8(2^{s-2}r-2^s)(g(F)-1),
\]
and if equality holds then the fibration $X \to C$ is isotrivial.
Since $K^2_X=2^s K^2_S$, we have
 \[
K^2_S= 2^{-s} K^2_X \ge 2(r-4)(g(F)-1).
 \]
and if equality holds then (by Theorem \ref{thm:isotriv=fd})
$t_2(S)=0$ and $S$ satisfies Bloch's conjecture.
\end{proof}

\begin{cor}\label{K2=8rational}
Let $S$ be a minimal complex surface of general type with $K^2_S=8$.
If $S$ has an involution $\sigma$ such that $S/\sigma$ is rational,
then $S$ satisfies Bloch's conjecture.
\end{cor}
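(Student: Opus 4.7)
The plan is to construct a fibration $f:S\to\P^1$ that saturates the inequality of Theorem \ref{K^2 bound}. First I would record the local invariants: since $p_g(S)=q(S)=0$ and $K^2_S=8$, Lemma \ref{rho+K2} gives $\rho(S)=2$, the Hodge Index Theorem then forces $\sigma$ to act as the identity on $H^2(S,\Q)$, and Corollary \ref{D^2 bound} yields $D^2=0$. Because $S/\sigma$ is rational, its desingularization $Y$ is rational, hence $t_2(Y)=0$, and Proposition \ref{old nice thm}(ii) already tells us that $\bar\sigma$ acts as $-1$ on $t_2(S)$; this alone is not enough, so one must still rule out $t_2(S)\ne 0$.

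To produce the required fibration I would select, on the rational surface $Y$, a base-point-free pencil $|R|$ of rational curves whose general member meets the smooth branch divisor $B=B'+\sum C_i$ of $\pi:X\to Y$ transversally (such pencils exist in abundance on a rational surface). Pulling back by $\pi$ gives a fibration $\tilde f:X\to\P^1$ whose general fiber is a double cover of $\P^1$ branched at $R\cdot B$ points, hence of genus $g=(R\cdot B-2)/2$ by Hurwitz. The double fibers of $\tilde f$ arise from those members $R\in|R|$ tangent to $B$ or containing a component of $B$; let $r$ denote their number. Composing with the blow-down $h:X\to S$ (and performing a Stein factorization if necessary) then yields a fibration $f:S\to\P^1$ with the same general fiber genus $g$ and the same number $r$ of double fibers.

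It remains to verify the numerical equality $(r-4)(g-1)=4$, so that $K^2_S=8=2(r-4)(g-1)$ and Theorem \ref{K^2 bound} produces $t_2(S)=0$. This verification is the main obstacle: it depends on a careful choice of pencil and on the detailed structure of the branch locus $B$ for $K^2_S=8$ surfaces with rational $Y$, which --- as recorded in the $K^2_S=8$ example of Section \ref{sec:involutions} --- behaves differently according to whether the number of isolated fixed points of $\sigma$ is $k=10$ or $k=12$. In the case $k=12$, when the bicanonical map $\Phi_2$ is composed with $\sigma$ by Theorem \ref{sigma-composed}, a natural candidate is the pencil induced by $\Phi_2:S\to S/\sigma\subset\P^N$; in the case $k=10$ one would instead use a ruling on the rational surface $Y$. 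In either situation the bookkeeping --- relating $R\cdot B$, the number of tangencies, and the formula $D^2=0$ --- should force $(r-4)(g-1)=4$ and hence Bloch's conjecture for $S$.
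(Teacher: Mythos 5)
Your overall strategy is the same as the paper's: produce a fibration $f:S\to\P^1$ with $r$ double fibers and general fiber of genus $g$ satisfying $K^2_S=2(r-4)(g-1)$, and then conclude via the equality case of Theorem \ref{K^2 bound}. But the crucial step --- the existence of such a fibration with $(r-4)(g-1)=4$ --- is exactly the point you leave unproved. You say the bookkeeping ``should force'' the equality, but an arbitrary base-point-free pencil of rational curves on $Y$ pulled back to $X$ will in general not saturate Beauville's bound: the numbers $R\cdot B$ and the count of multiple fibers depend heavily on the pencil, and nothing in your setup pins them down. This is not a routine verification one can defer; it is the substance of the corollary. The paper does not construct the pencil by hand at all: it invokes Pardini's classification of double planes of general type with $K^2=8$ and $p_g=0$ (\cite[Thm.\,2.2]{Pa03}), which states that such an $S$ (an $S$ with an involution whose quotient is rational) carries a fibration $S\to\P^1$ with either $r=6$ double fibers and $g(F)=3$, or $r=5$ and $g(F)=5$; in both cases $2(r-4)(g-1)=8=K^2_S$, and Theorem \ref{K^2 bound} finishes the argument.

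A secondary problem is your description of where the double fibers come from. A member of the pencil that is merely \emph{tangent} to the branch locus $B$ gives a singular but reduced fiber of the pulled-back fibration, not a fiber of multiplicity $2$; genuine double fibers arise only when the member of the pencil is contained in $B$ (so that its total transform in the double cover is twice a reduced curve). So even the qualitative mechanism you propose for producing the $r$ double fibers needs correction, and without control of which members of the pencil lie inside $B$ you cannot count $r$. In short: right target inequality and right final lemma, but the existence statement you need is a nontrivial classification result (Pardini), not something that follows from choosing a generic pencil on the rational quotient.
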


\begin{proof} 
By \cite[Thm.\,2.2]{Pa03}, $S$ has a fibration $S\to\P^1$
with general fiber $F$ and $r$ double fibers, such that either 
$r=6$ and $g(F)=3$ or $r=5$ and $g(F)=5$. In both case there is an equality 
$K^2_S=2(r-4)(g(F)-1)$. 
By Theorem \ref{K^2 bound}, $S$ satisfies Bloch's conjecture.
\end{proof}

\section{Inoue's surface with $K^2=7$}\label{sec:K2=7}

Until recently, the only known family of examples of surfaces $S$ of
general type with $K^2_S=7$ and $p_g(S)=0$ was constructed by M.\,Inoue
in \cite{I}. It is a quotient of a complete intersection in the
product of four elliptic curves, by a free action of $\Z/5$. (Another
family was found recently by Y.\,Chen in \cite{Ch}; see Remark
\ref{rem:Chen}).

An alternative description of Inoue's surface as a bidouble
cover of a rational surface was given in \cite{MP01a}.  Let $\Pi$ be
the blow up of $\P^2$ in 6 points $P_1,\dots,P_6$ as in 
\cite[Ex. 4.1]{MP01a} and let $\pi:X\to\Pi$ be the bidouble cover with
 branch locus $D=D_1+D_2+D_3$, where
\begin{align*}
D_1 = \Delta_1 + F_2 + L_1 +L_2 \ ; \ &
D_2 = \Delta_2 +F_3\ ; \ 
\notag\\
D_3 = \Delta_3 + F_1 + F'_1 & + L_3 +L_4.
\end{align*}
Here $\Delta_i$ ($i=1,2,3$) is the strict transform in $\Pi$ of 
the diagonal lines $P_1P_3$, $P_2P_4$ and $P_5P_6$, respectively;
$L_i$ is the strict transform of the line between $P_i$ and $P_{i+1}$
of the quadrilateral $P_1,P_2,P_3,P_4$;
$F_1$ is the strict transform of a general conic through 
$P_2,P_4,P_5,P_6$ and $F'_1\in|F_1|$; 
$F_2$ is the strict transform of a general conic through 
$P_1,P_3,P_5,P_6$; and $F_3$ is that of a general conic through
$P_1,P_2,P_3,P_4$.

The image of the morphism $f:\Pi\to\P^3$ given by $|{-K}_{\Pi}|$ is
a cubic surface $V\subset\P^3$; $f$ contracts each $L_i$ to a
point $A_i$, and is an isomorphism on $\Pi\setminus\cup_i L_i$.
The (set-theoretic) inverse image in $X$ of the 4 lines $L_i$ 
is the disjoint union of two (-1)-curves $E_{i,1}$ and $E_{i,2}$.
The surface $S$ is obtained by contracting these eight exceptional 
curves on $X$; the results in \cite{MP01a} show that 
$S$ is a surface of general type with $p_g(S)=0$, $K^2_S=7$, that
the bicanonical map $\Phi_2: S\to \P^7$ has degree 2, and the 
bicanonical map is one of the maps $S\to S/\sigma$ associated to 
the bidouble cover.

\begin{thm}\label{K2=7} 
Inoue's surface $S$ with $K^2_S=7$ satisfies Bloch's conjecture.
\end{thm}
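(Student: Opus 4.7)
The plan is to apply Theorem~\ref{3-involutions} to the three nontrivial involutions $\sigma_1,\sigma_2,\sigma_3$ of $S$ furnished by the bidouble cover structure $\pi\colon X\to\Pi$. Concretely, it suffices to prove $t_2(Y_i)=0$ for each $i$, where $Y_i$ is a smooth projective desingularization of $S/\sigma_i$; the conclusion $t_2(S)=0$ then follows from Theorem~\ref{3-involutions}, and Bloch's conjecture from Lemma~\ref{t2=0.BC}.

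First I would handle the involution---call it $\sigma_1$---with which the bicanonical map $\Phi_2\colon S\to\P^7$ is composed. Its existence is guaranteed by the paragraph preceding the theorem: $\Phi_2$ has degree~$2$ and coincides with one of the three quotient maps $S\to S/\sigma_i$ associated to the bidouble cover. Theorem~\ref{sigma-composed}(2) then forces $S/\sigma_1$ to be either rational or birational to an Enriques surface. The remark following Proposition~\ref{old nice thm} (via \cite{GP}) immediately gives $t_2(Y_1)=0$.

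For the two remaining involutions $\sigma_2,\sigma_3$, the strategy is to exploit the bidouble structure directly. Each intermediate quotient $X/\sigma_i$ is a double cover of the rational surface $\Pi$ branched along $D_j+D_k$ (with $\{i,j,k\}=\{1,2,3\}$), and its canonical class can be read off from the standard double-cover formula $K\equiv \pi_i^*(K_\Pi+L_i)$ combined with the explicit decompositions of $D_1,D_2,D_3$ listed in the excerpt. The target is to verify that the Kodaira dimension of the minimal model $W_i$ of $Y_i$ is at most~$1$; once this is known, the remark after Proposition~\ref{old nice thm} again delivers $t_2(Y_i)=0$, and Theorem~\ref{3-involutions} closes the argument.

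The hard part will be precisely this geometric analysis of the two ``non-bicanonical'' quotients $X/\sigma_2$ and $X/\sigma_3$. Given the intricate description of the $D_j$ as unions of strict transforms of the three diagonals $\Delta_k$, four conics $F_1,F'_1,F_2,F_3$, and the sides $L_k$ of the quadrilateral $P_1P_2P_3P_4$ on $\Pi$, determining $\kappa(W_i)$ will require a careful computation---most naturally by combining the invariant formulas of Pardini \cite{Pa91} for double and bidouble covers with the numerical data already worked out in \cite{MP01a} for Inoue's surface---and identifying each $W_i$ as rational, Enriques, or properly elliptic. I would expect one of $W_2, W_3$ to turn out rational or Enriques (governed by the branch data involving the diagonals and conics of low self-intersection) and the other to carry an elliptic fibration inherited from the pencil structure on $\Pi$; in either case the Kodaira dimension is $\le 1$ and the remark after Proposition~\ref{old nice thm} applies.
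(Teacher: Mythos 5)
Your overall skeleton matches the paper: reduce to $t_2(Y_i)=0$ for the three involutions of the bidouble cover, handle the bicanonical involution $\sigma_1$ via Theorem~\ref{sigma-composed}(2), and finish with Theorem~\ref{3-involutions} and Lemma~\ref{t2=0.BC}. The treatment of $\sigma_1$ is fine. But for $\sigma_2$ and $\sigma_3$ your proposal stops exactly where the real work begins: you state that one must show the minimal models $W_2,W_3$ have Kodaira dimension $\le 1$, describe a plan (double-cover canonical class formulas applied to $X/\sigma_i\to\Pi$), and then only \emph{expect} the answer to come out favorably. That verification is the heart of the theorem and is not carried out, so as it stands the argument has a genuine gap. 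Note also that your proposed route is more delicate than it looks: $X/\sigma_i$ is singular at the images of the isolated fixed points, the passage from $X$ to $S$ contracts the eight $(-1)$-curves $E_{i,j}$ (which changes which curves are fixed and hence the branch data), and pinning down $\kappa(W_i)$ from the canonical class of a resolution requires controlling all of this.

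The paper avoids that geometric analysis entirely by a numerical argument. It first counts the isolated fixed points of each $\sigma_i$ on $S$ (Lemma~\ref{k-Inoue}): $k'_i$ on $X$ is the intersection number $D_j\cdot D_k$ of the other two branch divisors, namely $7,9,5$, and contracting the $E_{i,j}$ lying over the $L_i$ contained in $D_1$ resp.\ $D_3$ adds $4$, giving $k_1=11$, $k_2=k_3=9$. Then $k_1=K_S^2+4$ confirms via Theorem~\ref{sigma-composed}(1) that $\Phi_2$ is composed with $\sigma_1$ (so $t_2(Y_1)=0$), while $k_2=k_3=9<11$ shows $\Phi_2$ is not composed with $\sigma_2,\sigma_3$; Rito's Theorem~\ref{not-composed} then excludes Kodaira dimension~$2$ for $W_2,W_3$, since $K_S^2=7$ is odd (ruling out case (1)) and $k=9$ equals neither $K_S^2$ nor $K_S^2-2$ (ruling out cases (2) and (3)). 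Bloch's conjecture for the non--general-type surfaces $Y_2,Y_3$ (\cite{BKL}) gives $t_2(Y_2)=t_2(Y_3)=0$. If you want to complete your proof, either carry out this fixed-point count and invoke Theorem~\ref{not-composed} as above, or actually identify the quotients (as Lee and Shin do: $W_1,W_3$ rational, $W_2$ Enriques); without one of these, the claim $t_2(Y_2)=t_2(Y_3)=0$ is unsupported.
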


\begin{proof}
Let $\sigma_1$, $\sigma_2$, $\sigma_3$ be the nontrivial involutions of
$X$ over $\Pi$, or equivalently, of $S$ over $V$.  We will determine
the number $k_i$ of isolated fixed points of $\sigma_i$ on $S$ in
Lemma \ref{k-Inoue} below. Let $Y_i$ be the desingularization of $S/\sigma_i$
given by \eqref{square:S-sigma}.  For $\sigma_1$ we have
$k_1=11=K_S^2+4$ so, by Theorem \ref{sigma-composed}, 
$\Phi_2$ is composed with the involution $\sigma_1$ and 
$Y_1$ is either rational or birational to an
Enriques surface.  In particular, $t_2(Y_1)=0$.

Since $k_2$ and $k_3$ are less than $K^2_S+4$, $\Phi_2$ is 
{\it not} composed with either $\sigma_2$ or $\sigma_3$, 
by Theorem \ref{sigma-composed}. By Theorem \ref{not-composed}, 
the minimal models of $Y_2$ and $Y_3$ cannot have Kodaira dimension 2,
because $K^2_S=7$ is odd and $k_2=k_3=9=K^2_S+2$.  
It follows from \cite{BKL} that $Y_2$ and $Y_3$ satisfy 
Bloch's conjecture, and so $t_2(Y_2)=t_2(Y_3)=0$. 
By Theorem \ref{3-involutions}, 
this shows that $t_2(S)=0$ and finishes the proof.
\end{proof}

\begin{lem}\label{k-Inoue} 
The involutions $\sigma_i$ on $S$ have $k_1=11$, $k_2=9$ and $k_3=9$ 
fixed points, respectively.
\end{lem}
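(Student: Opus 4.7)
The plan is to compute each $k_i$ via the topological identity
\[
k \;=\; K_S\cdot D+4,
\]
where $D\subset S$ is the one-dimensional part of the fixed locus of the involution. To derive this identity I would combine the Lefschetz fixed-point theorem (which for $p_g(S)=q(S)=0$ yields $\chi_{\mathrm{top}}(S^\sigma)=2+\operatorname{tr}(\sigma^*|H^2(S,\Q))$), the relation $\operatorname{tr}(\sigma^*|H^2)=2-D^2$ of \cite[4.2]{DMP} (already invoked in the proof of Corollary \ref{D^2 bound}), and the adjunction formula on $D$, which gives $\chi_{\mathrm{top}}(D)=-K_S\cdot D-D^2$ for any smooth divisor $D\subset S$.

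For the distinguished involution $\sigma_1$ with which the bicanonical map $\Phi_2$ is composed, Theorem \ref{sigma-composed}(1) gives immediately $k_1=K_S^2+4=11$. For $\sigma_2$ and $\sigma_3$, I would return to the bidouble cover $\pi\colon X\to\Pi$ described above and identify the divisorial fixed locus $R_i$ of $\sigma_i$ on $X$ as the preimage under $\pi$ of one of the three branch divisors $D_1,D_2,D_3$, the matching being determined by how the three order-two subgroups of $G=(\Z/2)^2$ are labeled. Pardini's canonical bundle formula
\[
K_X \;=\; \pi^*\bigl(K_\Pi+\tfrac12(D_1+D_2+D_3)\bigr),
\]
together with the explicit classes of $\Delta_j$, $F_j$ and $L_l$ on the blow-up $\Pi$ of $\P^2$ at the six marked points, reduces the computation of $K_X\cdot R_i$ to intersection theory on $\Pi$, for which the relevant numbers $D_a\cdot D_b$ and $D_a\cdot L_l$ are straightforward to compute.

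The main obstacle is the descent from $X$ to its minimal model $S$: one has to track the effect of contracting the eight $(-1)$-curves $E_{l,1}, E_{l,2}$ lying over the lines $L_1, L_2\subset D_1$ and $L_3, L_4\subset D_3$ on both the divisorial fixed locus and the isolated fixed points of each $\sigma_i$. Concretely, for each of the four pairs $\{E_{l,1},E_{l,2}\}$ and each involution $\sigma_i$, one must decide whether $\sigma_i$ fixes each curve pointwise (so the pair is absorbed into the image $D_i\subset S$ of $R_i$), swaps the two curves (so that the contraction introduces one additional isolated fixed point on $S$), or fixes each curve as a set but with a non-trivial involution on it (so that two isolated fixed points on $X$ are merged into a single one on $S$). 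Once this bookkeeping is carried out for all twelve pairs $(l,i)$, the values $K_S\cdot D_2 = K_S\cdot D_3 = 5$ should emerge, yielding $k_2=k_3=9$ and completing the lemma.
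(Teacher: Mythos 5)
Your strategy (the Lefschetz-type identity $k=K_S\cdot D+4$ together with Pardini's canonical bundle formula and intersection theory on $\Pi$) is genuinely different from the paper's, and it is numerically viable: it does give $K_S\cdot D=7,5,5$ for $\sigma_1,\sigma_2,\sigma_3$. But as written there are two real gaps. First, your treatment of $k_1$ is circular in context: Theorem \ref{sigma-composed}(1) yields $k_1=K_S^2+4$ only if you already know that $\Phi_2$ is composed with the particular involution $\sigma_1$ whose fixed divisor lies over $D_1$; what is recalled from \cite{MP01a} is only that $\Phi_2$ is composed with \emph{some} involution of the bidouble cover, and in the paper the identification of that involution as $\sigma_1$ is \emph{deduced} from $k_1=11$ in the proof of Theorem \ref{K2=7}. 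So you must either cite the identification independently or compute $k_1$ by the same intersection-theoretic method you propose for $k_2,k_3$ (which works: one finds $K_S\cdot D=7$). Second, for $k_2,k_3$ the decisive computation is only promised (``the values $K_S\cdot D_2=K_S\cdot D_3=5$ should emerge''), and the descent bookkeeping you outline is partly misdirected: the two $(-1)$-curves over a line $L_l\subset D_m$ form a single $G$-orbit with inertia $\langle\sigma_m\rangle$, so each is pointwise fixed by $\sigma_m$ and is simply interchanged with its partner by the other two involutions; your third case (setwise fixed with a nontrivial action, merging isolated fixed points) cannot occur, and in your formula-based approach the isolated fixed points on $X$ are irrelevant anyway. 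What actually needs checking is whether the surviving components of the fixed divisor pass through the contracted points, i.e. the intersection numbers of $D_2$ (resp.\ $\Delta_3+F_1+F_1'$, resp.\ $\Delta_1+F_2$) with the $L_l$, since these control the correction between $K_X\cdot R_i$ and $K_S\cdot D$; here they all vanish, but that verification is part of the proof.

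For comparison, the paper argues on $X$ rather than through the formula $k=K_S\cdot D+4$: it computes the isolated fixed points of $\sigma_i$ on $X$ directly as $k_i'=D_j\cdot D_k$ (the points over the transverse intersections of the other two branch divisors), using the listed relations among the $\Delta$'s, $F$'s and $L$'s to get $k_1'=7$, $k_2'=9$, $k_3'=5$, and then tracks the contraction $X\to S$: the four pointwise-fixed $(-1)$-curves over $L_1,L_2\subset D_1$ (resp.\ $L_3,L_4\subset D_3$) contract to four new isolated fixed points of $\sigma_1$ (resp.\ $\sigma_3$), while no $L_l$ lies in $D_2$, giving $k_1=11$, $k_2=9$, $k_3=9$. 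Your route buys a uniform formula on $S$ itself and avoids counting fixed points on $X$, but until the intersection computations and the contraction corrections are actually carried out, and the $\sigma_1$ case is handled without assuming the bicanonical identification, the proposal does not yet prove the lemma.
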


\begin{proof} 
There is a smooth bidouble cover $p:S\to V$, 
where $V$ is obtained from $\Pi$ by contracting the curves $L_i$ to 
4 singular points $A_1,\cdots A_4$. Hence we get a commutative diagram 
$$\CD
 X @>{\pi}>> \Pi \\
 @V{g}VV  @VV{f}V \\ 
 S @>{p}>> V \\ 
\endCD$$
where $g(E_{i,j})=Q_{i,j}$ and $p(Q_{i,j})=A_i$ for
$i=1,\cdots\!,4$ and $j=1,2$. The divisors on $\Pi$ satisfy the 
following relations, see \cite[Ex.4.1]{MP01a}:
\begin{align*}
-K_{\Pi} \equiv \Delta_1+\Delta_2+\Delta_3; \quad &
F_i \equiv \Delta_i+\Delta_{i+2}, i\in\Z/3; \\
\Delta_i\cdot L_j=0; \quad& \Delta_i\cdot F_j=2\delta_{i j}.
\end{align*}
From these relations it is easy to calculate that the number $k'_i$ of
isolated fixed points of $\sigma_i$ on $X$ is: $k'_1=(D_2 \cdot D_3)=7$;
$k'_2=(D_1\cdot D_3)=9$; and $k'_3=(D_1\cdot D_2)=5$. 
The surface $S$ may have additional isolated fixed points
$Q_{i,j}$, because some of the curves $E_{i,j}$ may be fixed on $X$.

By construction the divisorial part of the fixed locus of $\sigma_i$
on $X$ lies over $D_i$. Since $L_1+L_2$ is contained in $D_1$, the
fixed locus of $\sigma_1$ on $X$ contains the four $(-1)$-curves
$E_{i,1}$ ($i=1,2$) lying over $L_1$ and $L_2$. Their images $Q_{i,1}$
and $Q_{i,2}$ are fixed points of $\sigma_1$ on $S$, so $\sigma_1$ has
$k_1=k'_1+4=11$ isolated fixed points on $S$. Similarly, since $L_3$ and
$L_4$ are contained in $D_3$ the fixed locus of $\sigma_3$ on $S$
contains the four $(-1)$-curves $E_{3,j}$ and $E_{4,j}$, so $\sigma_3$
has $k_3=k'_3+4=9$ isolated fixed points on $S$. Finally, we have
$k_2=k'_2=9$ because none of the $L_i$ are contained in the divisorial
part $D_2$ of $\sigma_2$.
\end{proof}

\begin{rk} 
In the recent preprint \cite{LS12}, Lee and Shin consider the case of the
Inoue's surface $S$ with $K^2_S=7$ and $p_g(S)=0$ and compute
$K^2_{W_i}$, where $W_i$ is a minimal model of $S/\sigma_i$
($i=1,2,3$). In particular they show that $W_1$ and $W_3$ (hence $S/\sigma_1$
and $S/\sigma_3$) are rational, while $W_2$ (and hence $S/\sigma_2$) is
birational to an Enriques surface. Theorem \ref{K2=7}
also follows from this via Theorem \ref{3-involutions}.

In addition, Bauer's recent preprint \cite{Bau} gives a proof of 
Theorem \ref{K2=7}. Her proof is similar to ours.
\end{rk}

\begin{rk}\label{rem:Chen}
Recently Y.\ Chen in \cite{Ch} has produced a family of surfaces of
general type with $K^2=7$ and $p_g=0$, whose bicanonical map is not
composed with any involution on $S$. Chen has verified that Bloch's
conjecture holds for these surfaces.
\end{rk} 

\section{Burniat surfaces and surfaces with $K^2=6$, $K^2=7$} 

In this section we show
that Burniat surfaces satisfy Bloch's conjecture and consider other
surfaces with $K^2=6$.

\medskip
{\it Burniat surfaces}

\medskip\noindent
Burniat surfaces are certain surfaces of general type with $2\le K_S^2\le 6$.
It is proven in \cite[Thm.\,2.3]{BC1} that 
Burniat surfaces are exactly the surfaces constructed by 
Inoue in \cite{I} as the $(\Z/2)^3$-quotient of an invariant 
hypersurface in the product of 3 elliptic curves.

To construct them, one first forms a del Pezzo surface $\Pi$ as the
blowup of the plane at 3 non-collinear points $P_1,P_2,P_3$, see \cite{Pet}.
Then one forms a bidouble cover $\bar S$ of $\Pi$
whose branch locus is the union of the exceptional curves and
the proper transform of 9 other lines. Then
$S$ is the minimal resolution of $\bar S$, and $\bar S$ has 
$k=6-K_S^2$ singular points.

If $K_S^2=6$ (so $S=\bar S$), these are called {\it primary} Burniat surfaces. 
If $K_S^2=4,5$ (so $k=1,2$) $S$ is called {\it secondary}; 
if $K_S^2=2,3$ (so $k=3,4$) $S$ is called {\it tertiary}.
see \cite[2.2]{BC1}.

An important feature of the Burniat surfaces 
$S$ is that their bicanonical map is the composition of 
a bidouble cover $S\to \Pi'$ onto a normal del Pezzo surface
$\Pi'$ (a blowup of $\Pi$)
followed by the anticanonical embedding of $\Pi'$ into $\P^6$. 
(See \cite[3.1]{MP01} and \cite[4.1]{BC1}).
Thus $\Phi_2$ is composed with each of
the three nontrivial involutions $\sigma_i$ on $S$.

\begin{thm}\label{thm:Burniat} 
Every Burniat surface $S$ satisfies Bloch's conjecture
\end{thm}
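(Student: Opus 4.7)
The plan is to apply Theorem \ref{3-involutions} directly, reducing Bloch's conjecture for $S$ to the vanishing of $t_2$ on the three quotients $Y_i$ associated with the bidouble cover, and then to use the explicit description of the bicanonical map given in \cite{MP01} and \cite{BC1} together with Theorem \ref{sigma-composed} to establish each $t_2(Y_i)=0$.

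First I would verify the hypotheses of Theorem \ref{3-involutions}. From the construction recalled just above the statement, every Burniat surface $S$ is the minimal resolution of a bidouble cover $\bar S$ of the del Pezzo surface $\Pi$ (with $\bar S=S$ in the primary case $K_S^2=6$, and $\bar S$ having $6-K_S^2$ nodes in the secondary and tertiary cases). Hence $S$ is a minimal smooth model of a bidouble cover, so it carries three nontrivial involutions $\sigma_1,\sigma_2,\sigma_3$ coming from the Galois group $(\Z/2)^2$.

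Next I would use the key observation (highlighted just before the theorem) that the bicanonical map of any Burniat surface factors as
\[
\Phi_2\colon S \longrightarrow \Pi' \hookrightarrow \P^6,
\]
where the first arrow is the bidouble cover onto a normal del Pezzo surface $\Pi'$ (a blowup of $\Pi$) and the second is the anticanonical embedding. Because this factorization passes through each intermediate quotient $S/\sigma_i$, the map $\Phi_2$ is composed with every $\sigma_i$. Applying Theorem \ref{sigma-composed}(2), each $Y_i$ (the desingularization of $S/\sigma_i$) is either rational or birational to an Enriques surface. Since $t_2$ is a birational invariant for smooth projective surfaces and vanishes on rational and Enriques surfaces (as recorded in the remark following Proposition \ref{old nice thm}), we conclude $t_2(Y_i)=0$ for $i=1,2,3$. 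Theorem \ref{3-involutions} then gives $t_2(S)=0$, and since $p_g(S)=0$, Lemma \ref{t2=0.BC} yields Bloch's conjecture for $S$.

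The main obstacle, such as it is, lies not in any motivic manipulation but in confirming that the bidouble-cover structure and the compatibility with $|2K_S|$ really hold uniformly across all values $K_S^2\in\{2,3,4,5,6\}$: the primary case is transparent since $S=\bar S$, but in the secondary and tertiary cases one must check that after resolving the singularities of $\bar S$, the three involutions descend to $S$ and the bicanonical factorization through $\Pi'$ survives. This is exactly the content of the results cited from \cite{MP01} and \cite{BC1}, and I would simply invoke them rather than reprove them.
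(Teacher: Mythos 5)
Your proof is correct and follows essentially the same route as the paper's: establish that $\Phi_2$ factors through the bidouble cover $S\to\Pi'$ (citing \cite{MP01} and \cite{BC1}), apply Theorem \ref{sigma-composed} to get that each quotient is rational or birational to an Enriques surface, hence $t_2(Y_i)=0$, and conclude via Theorem \ref{3-involutions} and Lemma \ref{t2=0.BC}. Your extra remark about checking that the involutions and the bicanonical factorization persist in the secondary and tertiary cases is a reasonable precaution, and it is handled exactly as you say, by invoking the cited references.
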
 

\begin{proof} 
As noted above, the bicanonical map $\Phi_2 $ is the composition of a
bidouble cover $S\to\Pi'$ with the anticanonical embedding of $\Pi'$.
Thus $\Phi_2$ is composed with each of the three nontrivial involutions 
$\sigma_i$ on $S$ associated to this cover. By 
Theorem \ref{sigma-composed} each $S/\sigma_i$ is either rational or 
birational to an Enriques surface, so in particular a desingularization 
$W_i$ of $S/\sigma_i$ has $t_2(W_i)=0$.
From Theorem \ref{3-involutions} we have $t_2(S)=0$, viz.,
$S$ satisfies Bloch's conjecture. 
\end{proof} 

\begin{rk}
As noted in \cite[\S5]{BC1}, there is only one Burniat surface with
$K_S^2=2$, and it is a surface in the 6-dimensional family of
classical Campedelli surfaces with $\pi_1(S)=(\Z/2)^3$. Thus this
surface fits into the discussion in Section \ref{sec:K2=2}, 
where we also noted that such surfaces satisfy Bloch's conjecture.
\end{rk}

\medskip\goodbreak
{\it Surfaces with $K^2=6$} 

\medskip\noindent
We now consider a minimal surface $S$ of general type with $p_g(S)=0$ 
and $K^2_S=6$. Either the bicanonical map $\Phi_2:S\to\P^6$ is birational 
or the degree of $\Phi_2$ is  either 2 or 4, by \cite[Thm.1.1]{MP04}.

When the degree of $\Phi_2$ is 4 and $K^2_S=6$, a complete
classification has given in \cite{MP01}. The classification shows that
all these surfaces are Burniat surfaces. It follows that they satisfy
Bloch's conjecture, by Theorem \ref{thm:Burniat}.

Now suppose that the degree of $\Phi_2$ is 2. Then $\Phi_2$ determines
an involution $\tau$ on $S$, and $\Phi_2$ is composed with $\tau$.  By
\cite[1.2]{MP04} there is a fibration $f\colon S\to\P^1$, whose general
fibre $F$ is hyperelliptic of genus 3, such that the bicanonical
involution $\tau$ on $S$ induces the hyperelliptic involution on $F$
and $f$ has either $r=4$ or $r=5$ double fibres. (Both possibilities
occur; see \cite[\S4]{MP04}).

In particular $S/\tau$ is rational. Forming the square 
\eqref{square:S-sigma}, the desingularization $Y$ of $S/\tau$ is rational by
\cite[2.1]{MP04}. Hence $t_2(Y)=0$.  By Theorem \ref{sigma-composed}, 
$-4\le K_Y^2\le0$ and the fixed locus of $\tau$ has $10$ isolated points.
The cases of $r=4$ or $5$ double fibers corresponds to the cases 
$K^2_Y=-4$ and $K^2_Y>-4$.

\medskip 
Although we have been unable to show that
Bloch's conjecture holds for every surface with $K^2_S=6$ whose
bicanonical map has degree 2, we can show this for the examples
constructed in \cite[\S4]{MP04}. We shall use the notation of 
Section \ref{sec:K2=7},
since the construction of these examples are variations of the
construction of Inoue's surface described there.

\medskip
\begin{constr}\label{construct.1}
In the example in \cite[4.1]{MP04} we assume that the conics $F_1$,
$F_2$, $F_3$ all pass through a general point $P$ and that pairwise they
intersect transversally at $P$. In this case the branch locus 
$D=D_1+D_2+D_3$ acquires a singular point of type $(1,1,1)$,
and the resulting bidouble cover $X\to\Pi$ has a singularity over 
the image $P'$ of $P$. Blowing up $P'$ gives $\Pi'$; 
normalizing $X\times_{\Pi}\Pi'$ yields a bidouble cover $S \to \Pi'$, 
and the surface $S$ has $p_g=0$ and $K^2_S=6$. 
The surface $S$ has a fibration $f:S\to\P^1$, whose general fiber is 
hyperelliptic of genus 3 and $f$ has 4 double fibers. Moreover the 
exceptional divisor of $S \to X$ is a smooth rational curve $C$ with $C^2=-4$,
$$\CD
 S@>{p}>>\Pi' \\
 @VVV  @VVV \\ 
 X@>{p_0}>> \Pi \\ 
\endCD$$
\end{constr}

\begin{prop}\label{K2=6.1}
The surface $S$ of \ref{construct.1}
satisfies Bloch's conjecture.
\end{prop}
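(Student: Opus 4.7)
The plan is to apply Theorem \ref{3-involutions} to the three involutions $\sigma_1,\sigma_2,\sigma_3$ associated to the bidouble cover $S\to\Pi'$ in Construction \ref{construct.1}, showing that the desingularizations $Y_i$ of $S/\sigma_i$ each satisfy $t_2(Y_i)=0$.

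First, I would deal with the bicanonical involution. Since the bicanonical map $\Phi_2:S\to\P^6$ has degree 2, it determines a bicanonical involution $\tau$, and the factorization $S\to S/\tau\to\P^6$ forces $\tau$ to coincide with one of the three $\sigma_i$; relabeling, take $\sigma_1=\tau$. As already noted in the discussion preceding Construction \ref{construct.1}, the desingularization $Y_1$ of $S/\tau$ is rational (by \cite[2.1]{MP04}), so $t_2(Y_1)=0$.

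Next, for $\sigma_2$ and $\sigma_3$, I would first observe that because $\deg\Phi_2=2$ the bicanonical map cannot be composed with more than one involution, so by Theorem \ref{sigma-composed} it is not composed with $\sigma_2$ or $\sigma_3$. Then I would count the isolated fixed points $k_2,k_3$ on $S$ by computing intersection numbers $D_j\cdot D_k$ of the modified branch divisors on $\Pi'$, proceeding exactly as in Lemma \ref{k-Inoue} but adjusting for the blow-up of $P$ and the normalization. With $k_2,k_3$ in hand, I would apply Theorem \ref{not-composed} with $K^2_S=6$: its three cases of Kodaira dimension~2 minimal models $W_i$ require respectively $k=4$ (case (1), $\bar B$ four disjoint nodal curves and $D=0$), $k=6$ (case (2)), or $k=4$ (case (3)). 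If the computation shows that neither $k_2$ nor $k_3$ matches any of these configurations, the minimal models $W_2,W_3$ of $Y_2,Y_3$ must have Kodaira dimension $\le 1$. In any such case ($W_i$ rational, ruled, Enriques, K3, or properly elliptic) Bloch's conjecture is known (\cite{BKL}; cf.\ the Remark following Proposition \ref{old nice thm}), so $t_2(Y_i)=t_2(W_i)=0$.

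With $t_2(Y_i)=0$ for $i=1,2,3$, Theorem \ref{3-involutions} gives $t_2(S)=0$, and then Lemma \ref{t2=0.BC} yields Bloch's conjecture for $S$. The main obstacle in this plan is the bookkeeping for the fixed-point computation: the three conics $F_1,F_2,F_3$ acquire a common point $P$, the bidouble cover $X\to\Pi$ becomes singular over $P$, and one must track carefully how blowing up $P$ to form $\Pi'$ and normalizing change the divisors $D_i$ (hence the intersection numbers $D_j\cdot D_k$) and identify which exceptional curves of $S\to X$ are fixed by each $\sigma_i$, exactly as was done in the second paragraph of the proof of Lemma \ref{k-Inoue}.
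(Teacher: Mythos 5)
Your overall strategy coincides with the paper's (use the three bidouble-cover involutions, show $t_2(Y_i)=0$ for each, and conclude by Theorem \ref{3-involutions}), but there are two gaps. First, your treatment of $\sigma_1$ rests on the assertion that the degree-2 bicanonical map ``forces'' its involution $\tau$ to coincide with one of the $\sigma_i$. That is not a deduction: a priori $\tau$ could be an involution of $S$ unrelated to the cover $S\to\Pi'$, and the factorization $S\to S/\tau\to\P^6$ by itself identifies nothing. The paper closes exactly this point by counting fixed points of $\sigma_1$ itself: on $X$ one gets $k'_1=D_2\cdot D_3=6$ (the common point $P$ of $F_1,F_2,F_3$ lowers the intersection numbers relative to Lemma \ref{k-Inoue}), and the four $(-1)$-curves over $L_1,L_2\subset D_1$ contribute 4 more on $S$, so $k_1=10=K_S^2+4$; then Theorem \ref{sigma-composed}(1), which is an equivalence, shows $\Phi_2$ is composed with $\sigma_1$, and part (2) gives that $S/\sigma_1$ is rational (or Enriques), hence $t_2(Y_1)=0$. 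So $\sigma_1$ must be included in the fixed-point computation (or one must quote the relevant fact from \cite{MP04}); it cannot simply be relabelled as $\tau$.

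Second, for $\sigma_2,\sigma_3$ your argument is left conditional (``if the computation shows that neither $k_2$ nor $k_3$ matches\ldots''), but those numbers are the crux, so the proof is incomplete until they are produced. The paper computes $k'_2=8$, $k'_3=4$ on $X$ and then $k_2=k_3=8$ on $S$ (for $\sigma_3$ the four $(-1)$-curves over $L_3,L_4\subset D_3$ again contribute). With $k=8$, cases (2) and (3) of Theorem \ref{not-composed} (which require $k=K_S^2=6$ and $k=K_S^2-2=4$ respectively) are excluded, and case (1) is excluded not by a value of $k$ (the theorem specifies none there) but because the branch locus, e.g.\ $g^*(D_2)+C+\sum_{j=1}^{8}C_j$ for $\sigma_2$, is visibly not a disjoint union of 4 nodal curves; your phrasing ``case (1) requires $k=4$ and $D=0$'' should be replaced by this branch-locus argument. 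Once $k_1=10$ and $k_2=k_3=8$ are in hand, the rest of your plan ($t_2(Y_i)=0$ for $i=1,2,3$, then Theorem \ref{3-involutions} and Lemma \ref{t2=0.BC}) is exactly the paper's proof.
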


\begin{proof} 
We shall write $\sigma_i$ for the three nontrivial involutions on $X$
and on $S$ associated to the bidouble covers. We first compute the
number $k'_i$ of isolated fixed points on $X$ of $\sigma_i$, as we did
in Lemma \ref{k-Inoue}. Since $P \in D_1 \cap D_2 \cap D_3$ and 
$F_1\cdot F_3$ contains only 1 point outside of $P_1,\cdots\!,P_6$
we get $k'_1=(D_2\cdot D_3)=6$. Similarly we get $k'_2=8$ and $k'_3=4$.
Arguing again as in Lemma \ref{k-Inoue}, 
this implies that the number $k_i$ of isolated
fixed points of $\sigma_i$ on $S$ is: $k_1=10$, $k_2=k_3=8$.

By Theorem \ref{sigma-composed}, the bicanonical map $\Phi_2$ is
composed with $\sigma_1$ but not with $\sigma_2$ or $\sigma_3$. In
particular, $S/\sigma_1$ is rational. We will show that the
desingularizations $Y_2$ and $Y_3$ of $S/\sigma_2$ and $S/\sigma_3$
satisfy Bloch's conjecture, so $t_2(Y_2)=t_2(Y_3)=0$; 
Theorem \ref{3-involutions} will imply that $t_2(S)=0$, i.e., 
that $S$ satisfies Bloch's conjecture.

By symmetry it suffices to consider $Y_2$. Let $X_2$
denote the blowup of $S$ along the $k_2=8$ isolated fixed points of
$\sigma_2$, and form the square
\[
\CD X_2@>{h}>>S \\ 
@V{\bar \sigma_2}VV @V{\sigma_2}VV \\ 
Y_2@>{g}>> S/\sigma_2 \\ 
\endCD
\]
as in \eqref{square:S-sigma}. The images in $Y_2$ of the 8 exceptional
curves on $X_2$ are 8 disjoint nodal curves $C_1,\dots,C_8$. As
pointed after \eqref{square:S-sigma} the branch locus of $X_2\to Y_2$
is $B=g^*(D_2)+C+\sum C_j$. We get a minimal resolution $W$ of
$S/\sigma_2$ with branch locus $\bar B$ by blowing down curves on
$Y_2$. By Theorem \ref{not-composed}, $W$ cannot have Kodaira dimension 2:
$\bar B$ is not the disjoint union of 4 nodal curves,
and $k=8>K^2_S=6$.
%$\bar B$ has at least 2 double points.
Hence $W$ and $Y_2$ are not of general type, so $t_2(Y_2)=0$ as desired.
\end{proof}
 
\begin{constr}\label{construct.2}
The second example of a surface of general type with $p_g=0$,
$K^2_S=6$ is constructed in \cite[4.2]{MP04}; the fibration $S \to
\P^1$ has a hyperelliptic curve of genus 3 as its general fiber, and 5
double fibers. We start with the same configuration of 6 points
$P_1,\dots,P_6$ as in Section \ref{sec:K2=7}, and consider the point 
$P_7=\Delta_2\cap\Delta_3$. Form the blowup $\Pi'$ of $\pi$ at $P_7$, 
and write $e_7$ for the corresponding exceptional divisor. 
Let $\Delta'_2$ and $\Delta'_3$ denote the strict transform of 
$\Delta_2$ and $\Delta_3$ on $\Pi'$ and set $D=D_1+D_2+D_3$, where
\[
D_1=C +L_1 +L_2 \ ; \ D_2=F_3 \ ; \ D_3= F_1 +F'_1 + \Delta^{'}_2
+\bar \Delta^{'}_3 +L_3 +L_4.
\] 
Here $F_1,F'_1\in |F_1|$,  $F_3\in|F_3|$ and $C\in |F_2+F_3-2e_7|$ 
are general curves. As noted in \cite{MP04}, $C$ is a smooth irreducible 
curve of genus 0, and we get a
smooth bidouble cover $\pi:X\to\Pi'$ with $p_g(X)=0$.

For $i=1,\dots,4$ the inverse image of $L_i$ in $X$ is the disjoint
union of two $(-1)$-curves $E_{i1}, E_{i2}$. Also the inverse image of
$ \Delta^{'}_2$ is the disjoint union of two $(-1)$-curves $E_1$ and
$E_2$. The system $|-K_{\Pi'}|$ gives a degree 2 morphism $\Pi'\to\P^2$.
The surface $S$ obtained from $X$ by contracting $E_1,E_2$ and
$E_{ij}$, with $1\le i \le 4 $ and $1\le j \le 2$ is minimal of
general type with $K^2_S=6$ and $p_g(S)=0$.
\end{constr}

\begin{prop}\label{K2=6.2}
The surface $S$ of \ref{construct.2} satisfies Bloch's conjecture.
\end{prop}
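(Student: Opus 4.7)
The plan is to mirror the proof of Proposition \ref{K2=6.1} very closely. Since $S$ is the minimal model of a bidouble cover $X\to\Pi'$ of Construction \ref{construct.2}, it carries three nontrivial involutions $\sigma_1,\sigma_2,\sigma_3$, and by Theorem \ref{3-involutions} it suffices to establish $t_2(Y_i)=0$ for each desingularization $Y_i$ of $S/\sigma_i$.

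First I would compute the number $k_i$ of isolated fixed points of each $\sigma_i$ on $S$ by the same method as in Lemma \ref{k-Inoue} and Proposition \ref{K2=6.1}: one finds $k'_i=D_j\cdot D_k$ on $\Pi'$ (using the standard classes together with $C\in|F_2+F_3-2e_7|$, where $e_7$ is the exceptional divisor of $\Pi'\to\Pi$ over $P_7=\Delta_2\cap\Delta_3$), and then adds the contribution from each $(-1)$-curve contracted in $X\to S$ lying over a component of $D_i$. Since $L_1,L_2\subset D_1$, the four curves $E_{1,j},E_{2,j}$ contribute to $\sigma_1$; since $L_3,L_4,\Delta'_2\subset D_3$, the six curves $E_{3,j},E_{4,j},E_1,E_2$ contribute to $\sigma_3$; and no contracted $(-1)$-curve contributes to $\sigma_2$. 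The expected outcome is $k_1=K_S^2+4=10$, identifying $\sigma_1$ as the bicanonical involution (consistent with the family in \cite[\S4]{MP04} being constructed so that the bicanonical map has degree $2$), with $k_2$ and $k_3$ strictly larger than $K_S^2=6$.

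For the bicanonical involution, Theorem \ref{sigma-composed} gives that $Y_1$ is rational or birational to an Enriques surface, so $t_2(Y_1)=0$. For each of the other two involutions, the bicanonical map is not composed with $\sigma_i$, so Theorem \ref{not-composed} applies: to rule out that the minimal model $W_i$ of $Y_i$ has Kodaira dimension $2$, I would verify that $k_i\notin\{K_S^2,K_S^2-2\}=\{6,4\}$ (excluding cases (2) and (3) of Theorem \ref{not-composed}) and that the reduced branch locus $\bar B$ is not a disjoint union of four nodal curves (excluding case (1)). The latter should be immediate, because the image in $W_i$ of the relevant component of the branch divisor (containing $C$ or the conic $F_3$) is an irreducible curve of positive arithmetic genus, not a $(-2)$-curve. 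Thus $t_2(Y_i)=0$ for $i=2,3$ as well, and Theorem \ref{3-involutions} yields $t_2(S)=0$, so Bloch's conjecture holds for $S$ by Lemma \ref{t2=0.BC}.

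The main obstacle will be the intersection-theoretic bookkeeping on $\Pi'$: one must correctly handle the extra blowup at $P_7$, the $-2e_7$ correction in the class of $C$, and the symbol $\bar\Delta'_3$ appearing in $D_3$. Once explicit values of $k_2$ and $k_3$ are in hand, the Kodaira-dimension argument via Theorem \ref{not-composed} runs almost verbatim as in Proposition \ref{K2=6.1}.
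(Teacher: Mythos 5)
Your strategy is exactly the paper's: identify $\sigma_1$ as the involution composed with $\Phi_2$ (so $t_2(Y_1)=0$ by Theorem \ref{sigma-composed}), count isolated fixed points to see that $\Phi_2$ is not composed with $\sigma_2,\sigma_3$, rule out Kodaira dimension $2$ for $W_2,W_3$ via Theorem \ref{not-composed}, and conclude with Theorem \ref{3-involutions}. Your bookkeeping of which contracted $(-1)$-curves contribute is also the paper's: $L_3,L_4,\Delta'_2\subset D_3$ give the six curves $E_{3,j},E_{4,j},E_1,E_2$ for $\sigma_3$, and nothing is added for $\sigma_2$ since $D_2=F_3$ contains no contracted curve.

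Two points, however, are left genuinely open. First, the decisive numbers are only ``expected,'' not computed, and for $\sigma_3$ the outcome is not obvious in advance: the paper finds $k'_2=(D_1\cdot D_3)=8$ (from $C\cdot F_1=C\cdot F'_1=4$ and $C\cdot\Delta'_2=C\cdot\Delta'_3=0$, the latter because of the $-2e_7$ twist), but $k'_3=(D_1\cdot D_2)=(F_2+F_3)\cdot F_3=2$, so $k_3=8>6$ holds only after adding the $6$ contracted curves; your qualitative prediction is correct, but the inequality $k_2,k_3>K_S^2$ that excludes cases (2) and (3) of Theorem \ref{not-composed} rests entirely on these computations, which you defer. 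Second, your exclusion of case (1) via ``an irreducible curve of positive arithmetic genus'' is shaky as stated: $C$ and $F_3$ are themselves smooth rational curves (and $C$ lies in $D_1$, the branch divisor of the bicanonical involution, so it is irrelevant for $Y_2,Y_3$); positive genus could only be claimed for the curves in $Y_i$ lying over the components of $D_i$, which are double covers of rational curves and require a branch-point count to analyze. The paper instead argues directly that $\bar B_i$, being the image of $g_i^*(D_i)$ together with the $8$ disjoint nodal curves over the isolated fixed points, cannot be a disjoint union of $4$ nodal curves. With the intersection numbers actually computed and the case-(1) exclusion argued along these lines, your proof coincides with the paper's.
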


\begin{proof} 
We shall write $\sigma_i$ for the three nontrivial involutions on $X$
and on $S$ associated to the bidouble covers. As noted in the proof of
Proposition \ref{K2=6.2}, the bicanonical map $\Phi_2$ is composed with
$\sigma_1$ and hence $S/\sigma_1$ is rational and $t_2(S/\sigma_1)=0$.

Next, we compute the number $k'_2$ and $k'_3$ of isolated fixed points
on $X$ of $\sigma_2$ and $\sigma_3$ and the corresponding number $k_2$
and $k_3$ of isolated fixed points on $S$, as we did in
Lemma \ref{k-Inoue}.

We have $k'_2=(D_1\cap D_3)=
(C\cdot F_1)+(C\cdot F'_1)+(C\cdot\Delta'_2) + (C\cdot\Delta'_3)$. 
Now $(C\cdot F_1)=(C\cdot F'_1)=4$ and 
$(C\cdot \Delta'_2)=(C\cdot\Delta'_3)=0$, because 
$e_7\cdot\Delta'_2= e_7 \cdot\Delta^{'}_3=1$. 
Therefore $k'_2=8$ and $k_2=k'_2=8$ because $D_2=F_3$
does not contain any of the $L_i$.

Similarly $k'_3= (D_1\cdot D_2)=(F_2+F_3)\cdot F_3=2$, because $(F_3)^2=0$.
This last equality follows from adjunction because $F_3$ has genus 0 and 
$K_{\Pi} \cdot F_3= (-\Delta_1 +\Delta_2 +\Delta_3)\cdot F_3= -2$. 
The fixed divisorial part $D_3$ of $\sigma_3$ contains $L_3 +L_4$ and
$\Delta^{'}_2$ which contract on $S$ to the 6 points corresponding to
$E_{ij}$ and to $ E_1,E_2$. Therefore $k_3=k'_3 +6= 8$.

Since $k_2=k_3=8$ are less than $K^2_S +4=10$, Theorem \ref{sigma-composed}
implies that $\Phi_2$ is not composed with either $\sigma_2$ or $\sigma_3$.
As in the previous example let $X_i$ be the blowup of $S$ along the fixed
points of $\sigma_i$, and let $\pi_i:X_i\to Y_i$ be the map to the
desingularization $Y_i$ of $S/\sigma_i$ ($i=2,3$). Then $\pi_i$ is
branched on $B_i= g^*_i(D_i) + \sum_{1\le h\le8} C^h_i$, with
$(C^h_i)^2= -2$. Because the image $\bar B_i$ of the branch locus
$B_i$ in a minimal model $W_i$ of $Y_i$ is not the disjoint union of 4
nodal curves, it follows from Theorem \ref{not-composed} that the 
Kodaira dimension of $W_i$ cannot be 2.  Therefore both $Y_2$ and $Y_3$ 
satisfy Bloch's conjecture, so $t_2(Y_2)=t_2(Y_3)=0$. 
By Theorem \ref{3-involutions}, this shows that $t_2(S)=0$ 
and finishes the proof.
\end{proof}

\begin{subrem} 
The same argument applies to the surface in Example 4.3 in
\cite{MP04}, showing that it also satisfies Bloch's conjecture.
\end{subrem}

\section*{Acknowledgements}

The authors would like to thank Rita Pardini and Carlos Rito for 
their assistence in understanding the details of the constructions 
of surfaces of general type. We also thank Fabrizio Catanese and 
Ingrid Bauer for many useful comments during the preparation of this paper.


\begin{thebibliography}{10} 

\bibitem[Bar]{Bar}
R. Barlow,
{\it Rational equivalence of zero cycles for some more surfaces with $p_g=0$},
Invent. math. {\bf 79} (1985), 303-308).

\bibitem[Bau]{Bau}
I. Bauer, 
{\it Bloch's conjecture for Inoue surfaces with $p_g=0, K^2=7$}, 
preprint arXiv:1210.4287v 2 [math.AG], 29 Nov. 2012.
 
 \bibitem[BC1]{BC1}
I. Bauer and F. Catanese,
{\it Burniat surfaces I: 
fundamental groups and moduli of primary Burniat surfaces},
pp. 49--76 in {\it Classification of algebraic varieties},
EMS Ser. Congr. Rep., Eur. Math. Soc., 2011.

\bibitem[BC2]{BC2}
I. Bauer and F. Catanese,
{\it The moduli space of Keum-Naie surfaces}, 
Groups. Geom. Dyn. {\bf5} (2011), 231-250.

\bibitem[BCG]{BCG}
I. Bauer, F. Catanese and F. Grunewald
{\it The classification of surfaces with $p_g=0$ isogenous to a product of curves },
Pure Appl. Math Q. {\bf4} (2008) n.2, Part I, 547-586.

\bibitem[BCGP]{BCGP}
I. Bauer, F. Catanese, F. Grunewald and R. Pignatelli,
{\it Quotients of products of curves, new surfaces with $p_g=0$}, 
American J. Math. {\bf134} (2012), 993-1049.

\bibitem[BCP]{BCP}
I. Bauer, F. Catanese, and R. Pignatelli,
{\it Surfaces of general type with geometric genus 0: a survey }, 
Complex and Differential Geometry, 
Springer Proc. in Math. {\bf8} (2011) 1-48.

\bibitem[Be]{Be}
A. Beauville,
{\it L'inegalit\'e $p_g \ge 2q-4$ pour les surfaces de type general}, 
Appendix to [Deb],
Bull. Soc Math. France 110 (1982), 343--346.

\bibitem[BKL]{BKL} 
S. Bloch, A. Kas and D. Lieberman, 
{\it Zero cycles on surfaces with $p_g=0$}, 
Compositio Math. {\bf 33} (1976), 135--145.

\bibitem[BHPV]{BHPV}
W. Barth, K. Hulek, C. Peters and A. Van\,de\,Ven,
{\it Compact complex surfaces}, Second edition, Springer, 2004.

\bibitem[CCM]{CCM} 
A. Calabri, C. Ciliberto and M. Mendes Lopes, 
{\it Numerical Godeaux surfaces with an involution}, 
Trans. AMS {\bf 359} (2007), 1605--1632.

\bibitem[CD]{CD} 
F. Catanese and O. Debarre, 
{\it Surfaces with $K^2=2$, $p_g=1$, $q=0$}, 
J. Reine angew. Math. {\bf 395} (1989), 1-55.

\bibitem[CMP]{CMP} 
A. Calabri, M. Mendes Lopes and R. Pardini,
{\it Involutions on numerical Campedelli surfaces},
Tohuku Math. J. {\bf 60} (2008), 1--22. 

\bibitem [Ch]{Ch}
Y. Chen, 
{\it A new Family of surfaces of general type with $K^2=7$ and $p_g=0$},
preprint arXiv:1210.4633v 3 [math.AG],  1 Nov. 2012.

\bibitem[CG]{CG}
H. Clemens and P. Griffiths,
{\it The intermediate Jacobian of a cubic three-fold},
Annals of Math., {\bf95},(1972), 281-356.

\bibitem [DMP]{DMP} 
I. Dolgachev, M. Mendes Lopes and R. Pardini, 
{\it Rational surfaces with many nodes},  
Compositio Math. {\bf132} (2002), 349--363.
 
\bibitem[Fu]{Fu}
W. Fulton,
\newblock{\it Intersection Theory},
\newblock Springer-Verlag, Heidelberg-New-York, 1984.

\bibitem[GP1]{GP1} 
V. Guletskii and C. Pedrini, 
{\it The Chow motive of the Godeaux surface}, 
Algebraic Geometry, a Volume in Memory of Paolo Francia, 
Walter de Gruyter (2002), 180--195.

\bibitem[GP2]{GP} 
V. Guletskii and C. Pedrini, 
{\it Finite-dimensional motives and the conjectures of Beilinson and Murre},
$K$-Theory {\bf 30} (2003), 243--263.  

\bibitem[IM]{IM}
H. Inose and M. Mizukami, 
{\it Rational equivalence of 0-cyccles on some surfaces of 
general type with $p_g=0$},
Math. Ann. {\bf 244} (1979), 205--217.

\bibitem[I]{I}
M. Inoue,
{\it Some new surfaces of general type}, 
Tokyo J. Math. {\bf17} (1994), 295-319.

\bibitem[KMP]{KMP} 
B. Kahn, J. Murre and C. Pedrini, 
\newblock {\it On the transcendental part of the motive of a surface}, 
pp. 143--202 in "Algebraic cycles and Motives Vol II", 
\newblock London Math. Soc. LNS {\bf 344}, Cambridge University Press, 2008. 

\bibitem[Ki]{Ki} 
S. Kimura, 
{\it Chow groups are finite dimensional, in some sense}, 
Math. Ann. {\bf 331} (2005), 173--201.

\bibitem [Ku]{Ku}
V.S. Kulikov, 
{\it Old examples and a new example of surfaces of general type with $p_g=0$},
Izv. Math {\bf 68} (2004), 965--1008.

 \bibitem [LS12]{LS12}
Y. Lee and Y. Shin
{\it Involutions on a surface of general type with $p_g=q=0$ and $K^2=7$},
preprint  arXiv:1003.935 v 4 [math.AG],  Oct 2012.

\bibitem [MP01]{MP01}
M. Mendes Lopes and R. Pardini,
{\it A connected component of the moduli space of surfaces with $p_g=0$},
Topology {\bf 40}(2001), 977-991.

\bibitem[MP01a]{MP01a} 
M. Mendes Lopes and R. Pardini,
{\it The bicanonical map of surfaces with $p_g=0$ and $K^2\ge 7$},  
Bull. London Math. Soc.   {\bf 33} (2001), 265--274.

\bibitem [MP02]{MP02} 
M. Mendes Lopes and R. Pardini,
{\it A survey on the bicanonical map of surfaces with $p_g$=0 and $K^2\ge2$},
pp.\,277-287 in Algebraic Geometry, a Volume in Memory of Paolo Francia,
Walter de Gruyter (2002).

\bibitem [MP04]{MP04} 
M. Mendez Lopez and R. Pardini,
{\it Surfaces of general type with $p_g=0$, $K^2=6$ and 
non birational bicanonical map}, 
Math. Ann. {\bf 329} (2004), 535--552.

\bibitem[Na]{Na}
D. Naie, {\it Surfaces d'Enriques et 
un construction de surfaces de type general avec $p_g=0$ },
 Math. Zeit. {\bf 215} (1994), 269-280.

\bibitem[Pa91]{Pa91}
R. Pardini,
{\it Abelian covers of algebraic varieties}, 
J. reine angew. Math.  {\bf 417} (1991), 191--213.

\bibitem[Pa03]{Pa03}
R. Pardini,
{\it The classification of double planes of general type with $K^2=8$
and $p_g=0$}, J. Algebra {\bf 259} (2003), 95--118.

\bibitem [Ped]{Ped}
C. Pedrini
{\it On the finite dimensionality of a K3 surface},  
Manuscripta Math. {\bf 138} (2012), 59--72. 

\bibitem[Pet]{Pet}
C. Peters,
{\it 0n certain examples of surfaces with $p_g=0$ due to Burniat}, 
Nagoya Math J. {\bf 66} (1977),109-119.

 
\bibitem [Ri1]{Ri1}
C. Rito,
{\it Involutions on surfaces with $p_g=q=0$  and $K^2_S=3$}, 
Geom. Dedicata {\bf 157} (2012), 319--330.

\bibitem [Ri2]{Ri2}
C. Rito,
{\it Some bidouble planes with $p_g=q=0$ and $4 \le K^2 \le 7$}, 
preprint  arXiv:1103.2940 v 2 [math.AG], 2012.

\bibitem [Se]{Se}
F. Serrano,
{\it Isotrivial fibred surfaces}, 
Ann. Mat. Pura Appl. (IV) CLXXI (1996), 63--81.

\bibitem [Vois]{Vois}
C. Voisin 
{\it Bloch's conjecture for Campedelli and Barlow surfaces}, 
preprint arXiv:1203.935 v 1 [math.AG],  Oct 2012.

\bibitem[Zh]{Zh} 
D.Q. Zhang, 
{\it Quotients of K3 surfaces modulo involutions}, 
Japan J. of Math {\bf 24} n.2 (1998), 335-366.

\end{thebibliography}
\end{document}